\newtheorem{Theorem}{Theorem}[section]
\newtheorem{Proposition}[Theorem]{Proposition}
\newtheorem{Lemma}[Theorem]{Lemma}
\newtheorem{Corollary}[Theorem]{Corollary}
\newtheorem{Remark}[Theorem]{Remark}
\newcommand{\RR}{{{\rm I} \kern -.15em {\rm R} }}
\newcommand{\C}{{{\rm l} \kern -.42em {\rm C} }}
\newcommand{\nat}{{{\rm I} \kern -.15em {\rm N} }}
\def\be{\begin{equation}}
\def\ee{\end{equation}}
\def\beq{\begin{eqnarray}}
\def\eeq{\end{eqnarray}}
\def\beqs{\begin{eqnarray*}}
\def\eeqs{\end{eqnarray*}}
\newcommand{\caH}{{\mathcal H}}
\newcommand{\caB}{{\mathcal B}}
\newcommand{\caA}{{\mathcal A}}
\newcommand{\caV}{{\mathcal V}}
\newcommand{\bt}{\begin{Theorem}}
\newcommand{\et}{\end{Theorem}}
\newcommand{\br}{\begin{Remark}}
\newcommand{\er}{\end{Remark}}
\newcommand{\bc}{\begin{Corollary}}
\newcommand{\ec}{\end{Corollary}}
\newcommand{\bl}{\begin{Lemma}}
\newcommand{\el}{\end{Lemma}}
\newcommand{\bd}{\begin{definition}}
\newcommand{\ed}{\end{definition}}
\newenvironment{proof}[1][Proof]{\textbf{#1.} }{\ \rule{0.5em}{0.5em}}
\title{Well-posedness and stability results for nonlinear abstract evolution equations with time delays}
\author{{\sc Serge Nicaise}
\\Universit\'e de Valenciennes et du Hainaut Cambr\'{e}sis\\
ISTV--LAMAV\\
59313 Valenciennes Cedex 9, France
\\\\
{\sc Cristina Pignotti}
\\Dipartimento di Ingegneria e Scienze dell'Informazione e Matematica\\
 Universit\`{a} di L'Aquila\\
Via Vetoio, Loc. Coppito, 67010 L'Aquila Italy}
\date{}
\begin{document}

\textwidth=160 mm

\textheight=225mm

\parindent=8mm

\frenchspacing

\maketitle

\begin{abstract}
We consider abstract evolution equations with a nonlinear term depending on the state and on delayed states.
We show that, if the $C_0$-semigroup describing the linear part of the model is exponentially stable, then the whole system retains this  property
under some Lipschitz continuity assumptions on the nonlinearity. More precisely, we give
a general exponential decay estimate for small time delays if the nonlinear term is globally Lipschitz and an exponential decay estimate for solutions starting from small data when the nonlinearity is only locally Lipschitz
and the linear part is a negative selfadjoint operator. In the latter case we do not need any restriction on the size of the time delays. In  both cases, concrete examples  are presented that illustrate our abstract results.
\end{abstract}

\vspace{5 mm}

\def\qed{\hbox{\hskip 6pt\vrule width6pt
height7pt
depth1pt  \hskip1pt}\bigskip}



\section{Introduction}
\label{intro}\hspace{5mm}

\setcounter{equation}{0}

Let $\caH$ be a fixed Hilbert space with inner product $(\cdot,\cdot)_\caH$ and norm $\|\cdot\|_\caH$, and consider an operator $\caA$ from $\caH$ into itself that generates a $C_0$-semigroup  $(S(t))_{t\geq 0}$ that is exponentially stable, i.e.,
there exist two positive constants $M$ and $\omega$ such that
\begin{equation}\label{assumpexpdecay}
\|S(t)\|_{{\mathcal L}(\caH)}\leq M e^{-\omega t},\ \  \forall\  t\geq 0,
\end{equation}
where, as usual, ${\mathcal L}(\caH)$ denotes the space of  bounded linear
operators from $\caH$ into itself.

We consider the evolution equation

 \begin{eqnarray}\label{abstract}
\left\{
\begin{array}{ll}
& U_{t}(t) = \caA U(t)+\sum_{i=1}^I F_i(U(t), U(t-\tau_i)) \quad \mbox{\rm in }
(0,+\infty),\\
& U(t-\tau)=U_0(t), \quad \forall t\in (0,\tau],
\end{array}
\right.
\end{eqnarray}
where $I$ is a positive natural number and $\tau_i >0, i=1,\cdots, I,$ are   time delays.
Without loss of generality we can suppose that the delays are different from each other and that
\[
\tau_{i}<\tau=\tau_1, \ \forall i=2,\cdots, I.
\]
The nonlinear terms $F_i: \caH\times\caH\rightarrow\caH$  satisfy some Lipschitz conditions, while the initial datum $U_0$ satisfies $U_0\in C([0,\tau]; \caH)$. We denote by $U^0$ the initial datum at $t=0,$ namely
\begin{equation}\label{datoinzero}
U^0=U_0(\tau )\in\caH\,.
\end{equation}

Time delay effects frequently appear in many practical applications and physical models. On the other hand, it is well-known (cfr. \cite{Batkai,Datko,DLP,NPSicon06,XYL})
that they can induce some instability.
Then, we are interested in giving an exponential stability result for such a problem under a suitable smallness condition on the delay $\tau$ or on the initial data.
In \cite{JEE15} we have studied the case of linear dependency on the delayed state $U(t-\tau )$, i.e., we have considered the model
 \begin{eqnarray}\label{abstractJEE}
\left\{
\begin{array}{ll}
& U_{t}(t) = \caA U(t)+G(U(t))+k\caB U(t-\tau) \quad \mbox{\rm in }
(0,+\infty),\\
& U(0)=U^0,\   \caB U(t-\tau)=g(t), \quad \forall t\in (0,\tau),
\end{array}
\right.
\end{eqnarray}
where $\caB$ is a fixed bounded operator from $\caH$ into itself, $G: \caH\rightarrow\caH$  satisfies some Lipschitz conditions, the initial datum $U^0$ belongs to $\caH$ and $g\in C([0,\tau]; \caH)$.

For some concrete examples, mainly for $G\equiv 0,$ it was known
that the above problem, under certain {\sl smallness} conditions on the delay feedback $k\caB,$
is exponentially stable,
the proof being from time to time quite technical
because some observability inequalities or perturbation methods are used.
For instance the case of wave equation with interior feedback delay and dissipative boundary condition was considered in \cite{ANP10} by constructing suitable Lyapunov functionals while the case of a locally damped wave equation with distributed delay has been analyzed in \cite{SCL12} by introducing an auxiliary model easier to deal with
and using a perturbative argument. The last approach has then been extended to  a general
class of second order evolution equations in \cite{MCSS}. The case of a Timoshenko system with delay has been studied in \cite{Said} by using appropriate Lyapunov functionals. Also wave type equations with viscoelastic dissipative damping and time delay feedback have been considered (see e.g. \cite{guesmia, AlNP}). Moreover we quote the book   \cite{Batkai} for several examples, also in the parabolic case.

Therefore, in \cite{JEE15} our main goal was to furnish a general stability result based on a direct and more simple proof obtained by using the so-called
  Duhamel's formula (or variation of parameters formula). Indeed we proved an exponential stability result under a suitable condition between the constant $k$ and the constants $M, \omega, \tau,$ the nonlinear term $G$ and the norm of the bounded delay operator $\caB\,.$

Now, we extend  the results in \cite{JEE15} by analyzing the more  general model (\ref{abstract}). In particular, while in \cite{JEE15} we have considered only linear dependency on the delayed state $U(t-\tau )$ (see (\ref{abstractJEE})), we now consider a nonlinear dependency on the state and also on the delayed states.
We first treat the case when $F_i,$ $i=1,\dots, I,$   are globally Lipschitz continuous and give an exponential stability result under a smallness assumption on the time delays. The proof, when adapted to model (\ref{abstractJEE}), is easier with respect to the one that we gave in \cite{JEE15} for the globally Lipschitz case. However, the result from  \cite{JEE15} allowed a bit larger size on the time delay (see Remark \ref{compareJEE}) in order to have an exponential stability estimate for problem (\ref{abstractJEE}).

Then, we assume that the nonlinearity
is only  locally Lipschitz continuous (with a stronger topology) and we prove global existence and an exponential decay estimates for small initial data, independently of the size of the time delays.
In this last case,
we need to restrict ourselves to the case of a negative self-adjoint operator $\caA$. Nevertheless, this last case
has various applications in population dynamics (see section \ref{Examples}).

The paper is organized as follow. In section \ref{pbform} we deal with the globally Lipschitz case; we give a well--posedness result and an exponential stability estimate for small delays. Some concrete examples are also presented.
In section \ref{secgeneralNL} we treat the case where the reaction term is only locally Lipschitz. We prove the local well--posedness and then, for small initial conditions, a global existence result and an exponential decay estimate. Finally, in section \ref{Examples} we give some illustrative applications of the abstract results of section \ref{secgeneralNL} to models in population dynamics. In particular we consider model for single species and competition models for two species.

\section{The case of small delays with $F_i$ globally Lipschitz}\label{pbform}\hspace{5mm}

\setcounter{equation}{0}

In this section we assume that the nonlinear terms are globally Lipschitz and  study
existence and asymptotic behavior of solutions, generalizing Proposition 2.1 and Theorem 2.2 of \cite{JEE15}.

\subsection{Well-posedness and stability estimate}\label{Globally}

Let the functions $F_i,$ $i=1,\dots,I,$ be globally Lipschitz continuous, namely for every $i=1,\dots, I,$
\begin{equation}\label{lip}
\begin{array}{l}
\displaystyle{
\exists \gamma_i >0 \quad \mbox{\rm such\ that}\quad \Vert F_i(U_1, U_2)-F_i(U_1^*, U_2^*)\Vert_\caH
\le \gamma_i (\Vert U_1-U_1^*\Vert_\caH+\Vert U_2-U_2^*\Vert_\caH ),}\\\medskip
\displaystyle {\forall\ (U_1, U_2), (U_1^*, U_2^*)\in \caH\times\caH\,.}
\end{array}
\end{equation}

The following well--posedness result holds.

\begin{Proposition}\label{WellP}
Assume that the functions  $F_i$ satisfy {\rm (\ref{lip})}, for all $i=1,\dots ,I.$
For any initial datum $U_0\in C ([0,\tau]; \caH)$, there exists a unique (mild) solution
$U\in C([0,+\infty), {\mathcal H})$  of problem $(\ref{abstract}).$
Moreover,
\begin{equation}\label{Du}
U(t)=S(t)U^0+\int_0^t S(t-s)\sum_{i=1}^IF_i(U(s), U(s-\tau_i ))\,ds.
\end{equation}
\end{Proposition}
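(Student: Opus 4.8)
The plan is to set up a standard fixed-point argument on successive delay intervals, using Duhamel's formula as the definition of a mild solution. First I would fix the interval $[0,\tau]$ (recall $\tau=\tau_1$ is the largest delay, so on $[0,\tau]$ all the delayed arguments $U(s-\tau_i)$ fall either inside $[0,\tau]$, when $\tau_i<s$, or into the prescribed history $U_0$, when $s\le\tau_i$). Actually, to keep the argument clean I would iterate on the finer grid of length $\tau$, or even work on a short interval $[0,T_0]$ with $T_0$ to be chosen; on $[0,\min_i\tau_i]$ every delayed term $U(s-\tau_i)=U_0(\tau+s-\tau_i)$ is a known continuous function, so the map
\[
(\Phi U)(t)=S(t)U^0+\int_0^t S(t-s)\sum_{i=1}^I F_i\bigl(U(s),U(s-\tau_i)\bigr)\,ds
\]
is well-defined from $C([0,T_0];\caH)$ into itself. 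I would show $\Phi$ is a contraction on this space for $T_0$ small: using \eqref{assumpexpdecay} to bound $\|S(t-s)\|_{\caL(\caH)}\le M$ and the global Lipschitz bound \eqref{lip}, one gets
\[
\|\Phi U-\Phi V\|_{C([0,T_0];\caH)}\le M T_0\sum_{i=1}^I\gamma_i\,\|U-V\|_{C([0,T_0];\caH)},
\]
since the delayed-argument differences $\|U(s-\tau_i)-V(s-\tau_i)\|_\caH$ vanish when $s-\tau_i\le 0$ and are otherwise bounded by the same sup-norm. Choosing $T_0<\bigl(M\sum_i\gamma_i\bigr)^{-1}$ gives a unique fixed point, hence a unique mild solution on $[0,T_0]$ satisfying \eqref{Du}.

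Next I would extend to all of $[0,+\infty)$ by repeating the construction on $[T_0,2T_0]$, $[2T_0,3T_0]$, and so on. The key point is that the contraction constant $MT_0\sum_i\gamma_i$ does not depend on the datum, so the step length $T_0$ can be kept fixed, and after finitely many steps one covers any bounded interval; concatenating the pieces yields $U\in C([0,+\infty);\caH)$. At each step the delayed terms $U(s-\tau_i)$ are already determined on the previous intervals (or by $U_0$), so the fixed-point map is again a genuine self-map of the relevant $C$-space, and formula \eqref{Du} on $[0,(n+1)T_0]$ follows from the semigroup property $S(t-s)=S(t-nT_0)S(nT_0-s)$ together with \eqref{Du} on $[0,nT_0]$ and the local representation on $[nT_0,(n+1)T_0]$. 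Uniqueness globally follows from uniqueness on each step.

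The main obstacle, though essentially bookkeeping, is handling the delayed arguments correctly when $s-\tau_i$ straddles the junction points and the endpoint $0$: one must make sure that on each subinterval the function $s\mapsto\sum_i F_i(U(s),U(s-\tau_i))$ is continuous (so the Bochner integral and the $C$-valued fixed point make sense), which relies on the compatibility of $U_0\in C([0,\tau];\caH)$ with the value $U^0=U_0(\tau)$ at $t=0$ — this is exactly why the history is prescribed on the half-open/closed interval in \eqref{abstract}. A secondary technical point is verifying that $t\mapsto S(t)U^0$ and $t\mapsto\int_0^t S(t-s)f(s)\,ds$ are continuous for $f\in C([0,T_0];\caH)$, which is the classical regularity of the Duhamel term for a $C_0$-semigroup and can be quoted. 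Everything else is the routine Banach-fixed-point estimate sketched above.
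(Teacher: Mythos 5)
Your proposal is correct and follows essentially the same route as the paper: the paper also proceeds by a step method, treating the delayed arguments as known data on successive intervals of length $\min_i\tau_i$ and invoking the standard well-posedness theorem for semilinear equations with globally Lipschitz nonlinearity (Pazy, Ch.~6, Thm.~1.2), whose proof is precisely the Duhamel contraction you spell out. The only cosmetic difference is that you choose the step length from the contraction constant rather than from the smallest delay, which is equally valid since the $F_i$ are globally Lipschitz.
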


\noindent\begin{proof}
Like in \cite{JEE15}, we use an iterative argument.
Namely in the interval $(0,\tau_{\rm min})$, where $\tau_{\rm min}=\min_{i=1,\ldots, I} \tau_i,$ problem (\ref{abstract}) can be seen as a standard evolution problem
\begin{eqnarray}\label{abstract2}
\left\{
\begin{array}{ll}
& U_{t}(t) = \caA U(t)+g_1(U(t))\quad \mbox{\rm in }
(0,\tau_{\rm min})\\
& U(0)=U^0,
\end{array}
\right.
\end{eqnarray}
where $g_1(U(t))=\sum_{i=1}^IF_i (U(t), U(t-\tau_i))$. Note that the terms $U(t-\tau_i)$ can be regarded as known data for $t\in [0,\tau_{\rm min})\,.$ This problem has a unique
solution
$U\in  C([0,\tau_{\rm min}], {\mathcal H})$ (see Th. 1.2, Ch. 6 of
\cite{pazy})
satisfying
$$
U(t)=S(t)U^0+\int_0^t S(t-s)g_1(U(s))\,ds.
$$
This yields $U(t),$ for $t\in [0,\tau_{\rm min}].$
Therefore on $(\tau_{\rm min},2\tau_{\rm min})$, problem (\ref{abstract}) can be seen as the evolution problem

\begin{eqnarray}\label{abstract3}
\left\{
\begin{array}{ll}
& U_{t}(t) = \caA U(t)+g_2(U(t))\quad \mbox{\rm in }
(\tau_{\rm min}, 2\tau_{\rm min})\\
& U(\tau_{\rm min})=U({\tau_{\rm min}}-),
\end{array}
\right.
\end{eqnarray}
where $g_2(U(t))=\sum_{i=1}^I F_i(U(t),U(t-\tau_i))\,,$ since the terms $U(t-\tau_i )$ can be regarded as  data being known from the first step. Hence, this problem has a unique solution
$U\in C([\tau_{\rm min},2\tau_{\rm min}], {\mathcal H})$ given by
$$
U(t)=S(t-\tau_{\rm min})U({\tau_{\rm min}}-)+\int_{\tau_{\rm min}}^t S(t-s) g_2(U(s))\,ds, \forall t\in 
[\tau_{\rm min},2\tau_{\rm min}].
$$
By iterating this procedure, we obtain a global solution $U$ satisfying (\ref{Du}).\end{proof}

Now we will prove the following exponential stability result.

\begin{Theorem}\label{stab2}
Assume that the functions  $F_i$ satisfy $(\ref{lip})$ and 
\begin{equation}\label{inzero}
F_i(0,0)=0, 
\end{equation}
for all  $i=1,\dots, I.$
With $M, \omega$  from
 $(\ref{assumpexpdecay}),$  
we assume that (see $(\ref{lip})$)
\begin{equation}\label{suF}
 \gamma =\sum_{i=1}^I\gamma_i <\frac {\omega }{2 M}\,.
\end{equation}
If the time delay $\tau$ satisfies
 the {\em smallness} condition
\begin{equation}\label{sutau}
 \tau <\tau_0:=\frac 1 {\omega }\ln \Big (\frac {\omega } {M\gamma }-1 \Big )\,,
 \end{equation}
 then
 there exists $\omega' >0$ such that
the solution $U\in C([0,+\infty), {\mathcal H})$  of problem $(\ref{abstract}),$
with $U_0\in C([0,\tau ]; \caH),$ satisfies
\begin{equation}\label{exponentiald}
\|U(t)\|_\caH\le M e^{-\omega' t} \Big (\|U_0\|_\caH+\sum_{i=1}^I\gamma_i\int_0^{\tau_i} e^{\omega s}\Vert U(s-\tau_i)\Vert_\caH\, ds \Big ),
\quad \forall t\ge 0\,.
\end{equation}
\end{Theorem}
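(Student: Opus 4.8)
The plan is to combine Duhamel's formula \eqref{Du} with the exponential decay \eqref{assumpexpdecay} of the semigroup and a growth bound on the nonlinearities, and then to close a Gronwall-type inequality for the weighted quantity $v(t):=e^{\omega t}\|U(t)\|_\caH$. First I would note that \eqref{lip} together with \eqref{inzero} gives $\|F_i(U_1,U_2)\|_\caH\le\gamma_i(\|U_1\|_\caH+\|U_2\|_\caH)$ for all $U_1,U_2\in\caH$. Taking norms in \eqref{Du} and using \eqref{assumpexpdecay} then yields
\[
\|U(t)\|_\caH\le Me^{-\omega t}\|U^0\|_\caH+M\int_0^t e^{-\omega(t-s)}\sum_{i=1}^I\gamma_i\big(\|U(s)\|_\caH+\|U(s-\tau_i)\|_\caH\big)\,ds .
\]

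Multiplying by $e^{\omega t}$ and writing $\gamma=\sum_{i=1}^I\gamma_i$, the term $e^{\omega s}\|U(s)\|_\caH$ equals $v(s)$ and contributes $M\gamma\int_0^t v(s)\,ds$. For each delayed term I would split $\int_0^t e^{\omega s}\|U(s-\tau_i)\|_\caH\,ds$ at $s=\tau_i$: on $(0,\tau_i)$ (or on $(0,t)$ if $t\le\tau_i$) the integrand involves only the prescribed history $U_0$ and is bounded by $\int_0^{\tau_i}e^{\omega s}\|U(s-\tau_i)\|_\caH\,ds$, while on $(\tau_i,t)$ the substitution $\sigma=s-\tau_i$ produces $e^{\omega\tau_i}\int_0^{t-\tau_i}v(\sigma)\,d\sigma\le e^{\omega\tau}\int_0^t v(\sigma)\,d\sigma$, using $\tau_i\le\tau$ and $v\ge0$. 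Summing against the weights $\gamma_i$ and setting
\[
A:=M\Big(\|U^0\|_\caH+\sum_{i=1}^I\gamma_i\int_0^{\tau_i}e^{\omega s}\|U(s-\tau_i)\|_\caH\,ds\Big),\qquad B:=M\gamma\big(1+e^{\omega\tau}\big),
\]
one gets $v(t)\le A+B\int_0^t v(s)\,ds$ for all $t\ge0$; since $U\in C([0,\infty),\caH)$ by Proposition \ref{WellP}, $v$ is continuous, hence bounded on compacts, and Gronwall's lemma applies to give $v(t)\le Ae^{Bt}$, i.e. $\|U(t)\|_\caH\le Ae^{-(\omega-B)t}$.

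It then remains to check that the hypotheses make $\omega':=\omega-B$ strictly positive. The inequality $B<\omega$ reads $e^{\omega\tau}<\frac{\omega}{M\gamma}-1$, which is exactly the smallness condition \eqref{sutau}, and this is a non-vacuous constraint on $\tau>0$ precisely when $\frac{\omega}{M\gamma}-1>1$, i.e. when $\gamma<\frac{\omega}{2M}$, which is assumption \eqref{suF}. Finally $\|U^0\|_\caH=\|U_0(\tau)\|_\caH\le\|U_0\|_\caH$ (the sup-norm on $C([0,\tau];\caH)$), so $\|U(t)\|_\caH\le Ae^{-\omega't}$ is exactly \eqref{exponentiald}. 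The only delicate point I anticipate is the bookkeeping in the splitting of the delayed integrals — in particular ensuring that the sharp constant $e^{\omega\tau}$ (and not something larger) appears in $B$, since it is precisely this constant that matches the threshold $\tau_0$; everything else is a routine Gronwall argument.
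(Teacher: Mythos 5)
Your proposal is correct and follows essentially the same route as the paper's proof: the Duhamel formula combined with the bound $\|F_i(U_1,U_2)\|_\caH\le\gamma_i(\|U_1\|_\caH+\|U_2\|_\caH)$, the splitting of each delayed integral at $s=\tau_i$ into a history term $\alpha$ plus a shifted term carrying the factor $e^{\omega\tau_i}\le e^{\omega\tau}$, and Gronwall applied to $e^{\omega t}\|U(t)\|_\caH$, yielding the same decay rate $\omega'=\omega-M\gamma(1+e^{\omega\tau})$ and the same identification of \eqref{sutau} and \eqref{suF} as exactly the conditions making this positive and non-vacuous.
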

\noindent\begin{proof}
From (\ref{Du}),
we can estimate
\begin{equation}\label{C1}
\begin{array}{l}
\displaystyle{
\Vert U(t)\Vert_\caH \leq M e^{-\omega t} \Big (\|U^0\|_\caH+\sum_{i=1}^I\gamma_i\int_0^t   e^{\omega s}  (\Vert U(s)\Vert_\caH +\Vert U(s-\tau_i )\Vert_\caH )\,ds\Big )}  \\
\hspace {0.7 cm}\le \displaystyle {
M e^{-\omega t} \Big (\|U^0\|_\caH+\gamma
\int_0^t   e^{\omega s}  \Vert U(s)\Vert_\caH\,ds +\sum_{i=1}^I\gamma_i\int_0^t   e^{\omega s}  \Vert U(s-\tau_i )\Vert_\caH \,ds\Big ),\ \  \forall\ t>0\,.}
\end{array}
\end{equation}

Then, for $t\ge \tau\,,$

\begin{equation}\label{C4}
\begin{array}{l}
\displaystyle{
\Vert U(t)\Vert_\caH \leq M e^{-\omega t} \Big (\|U^0\|_\caH+\gamma\int_0^t   e^{\omega s}   \Vert U(s)\Vert_\caH \,ds }\\
\hspace{3 cm}\displaystyle{
+\alpha +\sum_{i=1}^I \gamma_i \int_{\tau_i}^t e^{\omega s} \Vert U(s-\tau_i )\Vert_\caH ds
\Big )\,,}
\end{array}
\end{equation}
where

\begin{equation}\label{C3}
\alpha =\sum_{i=1}^I\gamma_i \int_0^{\tau_i} e^{\omega s} \Vert U(s-\tau_i)\Vert_\caH ds\,.
\end{equation}
Thus, for $t\ge\tau,$ we obtain
\begin{equation}\label{C5}
\begin{array}{l}
\displaystyle{
\Vert U(t)\Vert_\caH \leq M e^{-\omega t} \Big (\|U_0\|_\caH+\gamma\int_0^t   e^{\omega s}   \Vert U(s)\Vert_\caH \,ds +\alpha }\\
\hspace{5.5 cm}\displaystyle{
+\sum_{i=1}^I \gamma_i\int_0^{t-\tau_i }
e^{\omega (s+\tau_i )}\Vert U(s)\Vert_\caH\,ds
\Big )}\\
\displaystyle{
\hspace {1.4 cm}\le  M e^{-\omega t} (
\Vert U_0\Vert_\caH +\alpha )
+\gamma  M e^{-\omega t} (1+e^{\omega \tau })
\int_0^t   e^{\omega s}   \Vert U(s)\Vert_\caH \,ds\,.}
\end{array}
\end{equation}
For $t\le [0,\tau ],$ one can estimate
\begin{equation}\label{P1}
\begin{array}{l}
\displaystyle{
\sum_{i=1}^I\gamma_i \int_0^t e^{\omega s}\Vert U(s-\tau_i )\Vert_\caH\, ds}\\
\hspace{2 cm}\displaystyle{
\le
\sum_{i=1}^I\gamma_i \int_0^{\tau_i} e^{\omega s}\Vert U(s-\tau_i )\Vert_\caH\, ds +
\sum_{i\ :\ \tau_i<t}\gamma_i \int_{\tau_i}^t e^{\omega s}\Vert U(s-\tau_i )\Vert_\caH\, ds}
\\
\hspace{2 cm}
\displaystyle{=\alpha + \sum_{i\ :\ \tau_i<t}\gamma_i \int_{0}^{t-\tau_i} e^{\omega (s+\tau_i)}\Vert U(s)\Vert_\caH\, ds}\\
\hspace{2 cm}
\displaystyle{\le\alpha + \sum_{i\ :\ \tau_i<t}\gamma_i e^{\omega\tau }\int_{0}^{t} e^{\omega s}\Vert U(s)\Vert_\caH\, ds}\\
\hspace{2 cm}
\displaystyle{\le\alpha +\gamma e^{\omega\tau }\int_{0}^{t} e^{\omega s}\Vert U(s)\Vert_\caH\, ds}\,.
\end{array}
\end{equation}
Therefore, from (\ref{C1}), also for $t\le\tau,$ it results
\begin{equation}\label{P2}
\Vert U(t)\Vert_\caH \leq
M e^{-\omega t} (
\Vert U^0\Vert_\caH +\alpha )
+\gamma  M e^{-\omega t} (1+e^{\omega \tau })
\int_0^t   e^{\omega s}   \Vert U(s)\Vert_\caH \,ds\,.
\end{equation}
Then, from (\ref{C5}) and (\ref{P2}),
\begin{equation}\label{P3}
e^{\omega t}\Vert U(t)\Vert_\caH \leq
M  (
\Vert U^0\Vert_\caH +\alpha )
+\gamma  M  (1+e^{\omega \tau })
\int_0^t   e^{\omega s}   \Vert U(s)\Vert_\caH \,ds\,, \quad \forall\ t\ge 0\,.
\end{equation}
Therefore,    Gronwall's lemma yields
\begin{equation}\label{C7}
\displaystyle{
e^{\omega t}\Vert U(t)\Vert_\caH\le Me^{M\gamma (1+
e^{\omega \tau })t } (\Vert U^0\Vert_\caH +\alpha )\,, \quad t\ge 0\,.}
\end{equation}

Estimate (\ref{C7}) can be rewritten as
\begin{equation}\label{C8}
\Vert U(t)\Vert_\caH\le Me^{-(\omega -M\gamma (1+
e^{\omega \tau }))t}
(\Vert U_0\Vert_\caH +\alpha )\,, \quad t\ge 0\,.
\end{equation}

Then, exponential decay is ensured under the condition

\begin{equation}\label{C9}
\omega >M\gamma (1 +e^{\omega \tau })\,.
\end{equation}

Under the assumption (\ref{suF}), inequality (\ref{C9}) is satisfied if and only if $\tau$ satisfies the smallness condition
(\ref{sutau}). 
Then, the statement is proved. \end{proof}

We can restate the previous theorem for the particular case of model (\ref{abstractJEE}):

\begin{Theorem}\label{JEEmodel}
Assume that the nonlinear term $G$ satisfies the Lipschitz condition
 \begin{equation}\label{lip2}
\exists \gamma >0 \quad \mbox{\rm such\ that}\quad \Vert G(U_1)-G(U_1^*)\Vert_\caH
\le \gamma \Vert U_1-U_1^*\Vert_\caH ,
\quad \forall\ U_1, U_1^*\in \caH\,.
\end{equation}
Let $M, \omega, \gamma$ as in
 $(\ref{assumpexpdecay})$ and $(\ref{lip2}).$
Moreover, let us assume that $G(0)=0$ and   $(\ref{suF})$ hold.
If the time delay $\tau$ satisfies
 the {\em smallness} condition
\begin{equation}\label{sutau2}
 \tau <\tau_0^\prime :=\frac 1 {\omega }\ln\frac 1 {k\Vert {\mathcal B}\Vert_{\mathcal H}} \Big (\frac {\omega } {M }-\gamma \Big )\,,
 \end{equation}
 then
 there exists $\omega^* >0$ such that
the solution $U\in C([0,+\infty), {\mathcal H})$  of problem $(\ref{abstractJEE}),$
with $U^0\in {\mathcal H}$ and $g\in C([0,\tau ]; \caH),$ satisfies
\begin{equation}\label{exponentiald2}
\|U(t)\|_\caH\le M e^{-\omega^* t} \Big(\|U_0\|_\caH+k \int_0^\tau e^{\omega s}\Vert g(s)\Vert_\caH\, ds \Big),
\quad \forall t\ge 0.
\end{equation}
\end{Theorem}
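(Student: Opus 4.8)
The model (\ref{abstractJEE}) is the particular case of (\ref{abstract}) with $I=1$, $\tau_1=\tau$ and the single nonlinearity $F_1(U_1,U_2):=G(U_1)+k\caB U_2$; by (\ref{lip2}) this $F_1$ obeys the refined Lipschitz bound $\Vert F_1(U_1,U_2)-F_1(U_1^*,U_2^*)\Vert_\caH\le\gamma\Vert U_1-U_1^*\Vert_\caH+k\Vert\caB\Vert_{\caL(\caH)}\Vert U_2-U_2^*\Vert_\caH$, and $F_1(0,0)=0$ since $G(0)=0$. Well--posedness, together with the Duhamel representation $U(t)=S(t)U^0+\int_0^t S(t-s)\big(G(U(s))+k\caB U(s-\tau)\big)\,ds$, where $\caB U(s-\tau)=g(s)$ for $s\in(0,\tau)$ by the history condition, follows exactly as in Proposition \ref{WellP} (see also \cite{JEE15}). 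The plan is then to rerun the Gronwall argument of the proof of Theorem \ref{stab2}, but carrying the Lipschitz constant $\gamma$ of $G$ and the constant $k\Vert\caB\Vert_{\caL(\caH)}$ of the delay feedback \emph{separately}: this is what produces the sharp threshold $\tau_0^\prime$ of (\ref{sutau2}) instead of the coarser one obtained by bounding both by a single constant.

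Concretely, I would take the $\caH$-norm in the Duhamel formula, use (\ref{assumpexpdecay}), the bound $\Vert G(U(s))\Vert_\caH\le\gamma\Vert U(s)\Vert_\caH$, and split the delayed integral at $s=\tau$: on $(0,\tau)$ the integrand equals $k\Vert g(s)\Vert_\caH$, while on $(\tau,t)$ it is at most $k\Vert\caB\Vert_{\caL(\caH)}\Vert U(s-\tau)\Vert_\caH$, which after the substitution $s\mapsto s+\tau$ contributes $k\Vert\caB\Vert_{\caL(\caH)}e^{\omega\tau}\int_0^{t-\tau}e^{\omega s}\Vert U(s)\Vert_\caH\,ds$. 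Multiplying by $e^{\omega t}$ and using $\int_0^{t-\tau}\le\int_0^t$ and $\int_0^{\min(t,\tau)}\le\int_0^\tau$ (the case $t<\tau$ being trivial since the delayed integral is then empty and one only adds nonnegative terms), one obtains, for every $t\ge0$,
\[
e^{\omega t}\Vert U(t)\Vert_\caH\le M\Big(\Vert U^0\Vert_\caH+k\int_0^\tau e^{\omega s}\Vert g(s)\Vert_\caH\,ds\Big)+M\big(\gamma+k\Vert\caB\Vert_{\caL(\caH)}e^{\omega\tau}\big)\int_0^t e^{\omega s}\Vert U(s)\Vert_\caH\,ds .
\]
Gronwall's lemma then gives $e^{\omega t}\Vert U(t)\Vert_\caH\le M\big(\Vert U^0\Vert_\caH+k\int_0^\tau e^{\omega s}\Vert g(s)\Vert_\caH\,ds\big)\exp\{M(\gamma+k\Vert\caB\Vert_{\caL(\caH)}e^{\omega\tau})t\}$, which is precisely (\ref{exponentiald2}) with $\omega^\ast:=\omega-M\gamma-Mk\Vert\caB\Vert_{\caL(\caH)}e^{\omega\tau}$.

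It then remains to decide when $\omega^\ast>0$. This inequality is equivalent to $e^{\omega\tau}<\frac1{k\Vert\caB\Vert_{\caL(\caH)}}\big(\frac{\omega}{M}-\gamma\big)$, i.e. to the smallness condition (\ref{sutau2}); moreover the right-hand side of (\ref{sutau2}) is positive --- so that the admissible range of delays is nonempty --- exactly when $\gamma+k\Vert\caB\Vert_{\caL(\caH)}<\omega/M$, which is ensured by the standing assumption (\ref{suF}) applied to the Lipschitz data of $F_1$. Thus the only genuine point of the argument is the separate bookkeeping of the two constants $\gamma$ and $k\Vert\caB\Vert_{\caL(\caH)}$, together with the correct treatment in the boundary layer $s\in(0,\tau)$ of the prescribed history $g$, which enters (\ref{exponentiald2}) without the factor $\Vert\caB\Vert_{\caL(\caH)}$; everything else is a transcription of the proof of Theorem \ref{stab2}.
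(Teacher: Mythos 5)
Your proof is correct and follows essentially the same route as the paper: Theorem \ref{JEEmodel} is presented there without a separate proof, as a restatement of Theorem \ref{stab2} obtained by rerunning the Duhamel--Gronwall argument for $F_1(U_1,U_2)=G(U_1)+k\caB U_2$ while keeping the constants $\gamma$ and $k\Vert\caB\Vert_{\caL(\caH)}$ separate, and your condition $\gamma+k\Vert\caB\Vert_{\caL(\caH)}e^{\omega\tau}<\omega/M$ is exactly the one the authors record in Remark \ref{compareJEE}. Your handling of the history term $g$ on $(0,\tau)$ and of the nonemptiness of the delay range via (\ref{suF}) matches the intended argument.
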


\begin{Remark} {\rm
 Note that Theorem \ref{stab2} is very general.
For instance, it furnishes stability results for previously studied models for wave type equations (cfr. \cite{ANP10, SCL12, MCSS}
and subsections \ref{esempioDampedWave}-\ref{esempioMemory} below) and Timoshenko models (cfr.
\cite{Said}) eventually with the addition of a nonlinear term depending on the not delayed state and on a finite number of delayed states. Also, it includes recent stability results for problems with
viscoelastic damping and time delay (cfr. \cite{guesmia, AlNP}).}
\end{Remark}

\begin{Remark}\label{compareJEE}{\rm
Note that the condition (\ref{sutau2}) is a bit less general with respect to the one obtained in \cite{JEE15}. Indeed, here we need
$$ k\Vert {\mathcal B}\Vert_{\mathcal H} e^{\omega\tau}+\gamma < \frac {\omega} M \,,$$
instead of the condition
$$ k\Vert {\mathcal B}\Vert_{\mathcal H}e^{\omega\tau}+\gamma < \frac { e^{\omega\tau}-1}{M\tau}$$
 assumed there.
On the other hand, the present approach allows to extend the class of the problems for which the stability result holds.
}\end{Remark}

We now present a couple of examples, among the many, illustrating our previous abstract result.

\subsection{The damped wave equation}\label{esempioDampedWave}

Let $\Omega$ be a bounded domain in $\RR^n$ with a boundary $\Gamma$ of class $C^2.$
Let $f_j:\RR\rightarrow \RR$ be globally Lipschitz continuous functions, $j=1,2,$ satisfying
$f_1(0)=f_2(0)=0.$
Let us consider the following semilinear damped wave equation:
$$
\begin{array}{l}
\displaystyle{
u_{tt}(x,t)-\Delta u(x,t)+a(x)u_t(x,t)=f_1(u(x,t))+f_2(u(x, t-\tau ) ) \quad \mbox{\rm in } \ \Omega\times (0, +\infty),}\label{DW1}\\
\displaystyle{u(x,t)=0\quad \mbox{\rm in }\ \Omega\times (0,+\infty )},\label{DW2}\\
\displaystyle{u(x,t-\tau)=u_0(x, t),\ \ u_t(x,t-\tau)=u_1(x,t)\quad \mbox{\rm in}\ \Omega\times (0,\tau]}\,,\label{DW3}
\end{array}
$$
where $\tau >0$ is the time delay and the damping coefficient $a\in L^\infty (\Omega )$
satisfies
\begin{equation}\label{Cri2}
a(x)\ge a_0>0, \quad \mbox{\rm a.e.}\ \ x\in\omega\,,
\end{equation}
for some nonempty open subset $\omega$ of $\Omega$ satisfying some control geometric properties (see e.g. \cite{BLR}). The initial datum $(u_0, u_1)$ is taken in $C([0,\tau], H^1_0(\Omega)\times L^2(\Omega)).$

Setting $U= (u, u_t)^T,$ this problem can be rewritten in the form $(\ref{abstract})$
with ${\mathcal H}=  H^1_0(\Omega)\times L^2(\Omega),$
$${\mathcal A}=
\left (
\begin{array}{l}
0 \quad \quad\
 1\\
\Delta\ \ -a
\end{array}
\right )$$
and
$F_1(U(t), U(t-\tau))=(0, f_1(u(t)) +f_2(u(t-\tau )))^T\,.$
It is well--known that ${\mathcal A}$ generates a strongly continuous semigroup which is exponentially stable (see e.g. \cite{zuazua, Komornikbook}), thus the assumptions on $f_1, f_2$ ensure that Proposition \ref{WellP} and Theorem \ref{stab2}  apply to this model giving a well-posedness result and an exponential decay estimate of the energy for small values of the time delay $\tau.$

\subsection{The wave equation with memory }\label{esempioMemory}
Let $\Omega\subset\RR^n$ be an open bounded set with a smooth boundary and let $f_j,j=1,2,$ as in the previous example.
Let us consider the following problem:

\begin{eqnarray}
& &u_{tt}(x,t) -\Delta u (x,t)+
\int_0^\infty \mu (s)\Delta u(x,t-s) ds\nonumber \\
& &\hspace{3,5 cm}
 =f_1(u(x,t))
+f_2( u(x,t-\tau) )\quad \mbox{\rm in}\ \Omega\times
(0,+\infty),\label{1.1d}\\
& &u (x,t) =0\quad \mbox{\rm on}\ \partial\Omega\times
(0,+\infty),\label{1.2d}\\
& &u(x,t)=u_0(x, t)\quad \hbox{\rm
in}\ \Omega\times (-\infty, 0], \label{1.3d}
\end{eqnarray}
where  the initial datum $u_0$ belongs to a suitable space, the constant $\tau >0$ is the time delay and
the  memory kernel $\mu :[0,+\infty)\rightarrow [0,+\infty)$
is a locally absolutely continuous function
satisfying

i) $\mu (0)=\mu_0>0;$

ii) $\int_0^{+\infty} \mu (t) dt=\tilde \mu <1;$

iii) $\mu^{\prime} (t)\le -\alpha \mu (t), \quad \mbox{for some}\ \ \alpha >0.$

As in \cite{Dafermos}, we denote
\begin{equation}\label{eta}
\eta^t(x,s):=u(x,t)-u(x,t-s).
\end{equation}
Then we can restate (\ref{1.1d})--(\ref{1.3d})
as

\begin{eqnarray}
& &u_{tt}(x,t)= (1-\tilde \mu)\Delta u (x,t)+
\int_0^\infty \mu (s)\Delta \eta^t(x,s) ds\nonumber\\
& &\hspace{4 cm}
=f_1(u(x,t))+ f_2(u(x,t-\tau))\quad \mbox{\rm in}\ \Omega\times
(0,+\infty)\label{e1d}\\
& & \eta_t^t(x,s)=-\eta^t_s(x,s)+u_t(x,t)\quad \mbox{\rm in}\ \Omega\times
(0,+\infty)\times (0,+\infty ),\label{e2d}\\
& &u (x,t) =0\quad \mbox{\rm on}\ \partial\Omega\times
(0,+\infty)\label{e3d}\\
& &\eta^t (x,s) =0\quad \mbox{\rm in}\ \partial\Omega\times
(0,+\infty), \ t\ge 0,\label{e4d}\\
& &u(x,0)=u_0(x)\quad \mbox{\rm and}\quad u_t(x,0)=u_1(x)\quad \hbox{\rm
in}\ \Omega,\label{e5d}\\
& & \eta^0(x,s)=\eta_0(x,s) \quad \mbox{\rm in}\ \Omega\times
(0,+\infty), \label{e6d}
\end{eqnarray}
where
\begin{equation}\label{datiinizd}
\begin{array}{l}
u_0(x)=u_0(x,0), \quad x\in\Omega,\\
u_1(x)=\frac {\partial u_0}{\partial t}(x,t)\vert_{t=0},\quad x\in\Omega,\\
\eta_0(x,s)=u_0(x,0)-u_0(x,-s),\quad x\in\Omega,\  s\in (0,+\infty).
\end{array}
\end{equation}

Let us denote
${U}:= (u,u_t,\eta^t)^T.$ Then we can rewrite    problem (\ref{e1d})--(\ref{e6d})
in the abstract form
\begin{equation}\label{abstractd}
\left\{
\begin{array}{l}
{U}_t(t)={\mathcal A} {U}(t)+F_1(U(t),U(t-\tau)),\\
{U}(0)=(u_0,u_1, \eta_0)^T,
\end{array}
\right.
\end{equation}
where the operator ${\mathcal A}$ is defined by
\begin{equation}\label{Operator}
{\mathcal A}\left (
\begin{array}{l}
u\\v\\w
\end{array}
\right )
:=\left (
\begin{array}{l}
v\\
(1-\tilde\mu)\Delta u+\int_0^{\infty}\mu (s)\Delta w(s)ds\\
-w_s+v
\end{array}
\right ),
\end{equation}
with domain

\begin{equation}\label{dominioOpd}
\begin{array}{l}
{\mathcal D}({\mathcal A}):=\left\{
\ (u,v,\eta )^T\in   H^1_0(\Omega)\times H^1_0(\Omega)
\times L^2_{\mu}((0,+\infty);H^1_0(\Omega))\, :\right.\\\medskip
\hspace{0.5 cm}\left.
(1-\tilde\mu)u+\int_0^\infty \mu (s)\eta (s) ds \in H^2(\Omega)\cap H^1_0(\Omega),\ \eta_s\in  L^2_{\mu}((0,+\infty);H^1_0(\Omega))
\right\},
\end{array}
\end{equation}
where
$L^2_{\mu}((0, \infty);H^1_0(\Omega ))$ is the Hilbert space
of $H^1_0-$ valued functions on $(0,+\infty),$
endowed with the inner product
$$\langle \varphi, \psi\rangle_{L^2_{\mu}((0, \infty);H^1_0(\Omega ))}=
\int_{\Omega}\left (\int_0^\infty \mu (s)\nabla \varphi (x,s)\nabla \psi (x,s) ds\right )dx.
$$

\noindent
Denote by ${\mathcal H}$
the Hilbert space

$${\mathcal H}=
H^1_0(\Omega)\times L^2(\Omega)\times L^2_{\mu}((0, \infty);H^1_0(\Omega )),$$
equipped
  with the inner product

\begin{equation}\label{innerd}
\begin{array}{l}
\left\langle
\left (
\begin{array}{l}
u\\
v\\
w
\end{array}
\right ),\left (
\begin{array}{l}
\tilde u\\
\tilde v\\
\tilde w
\end{array}
\right )
\right\rangle_{\mathcal H}
:= \displaystyle{
 (1-\tilde\mu )\int_\Omega \nabla u\nabla\tilde u dx + \int_\Omega v\tilde v dx +
\int_{\Omega} \int_0^\infty \mu (s)\nabla w\nabla\tilde w ds dx}.
\end{array}
\end{equation}
It is well--known (see e.g. \cite{Pata}) that the operator ${\mathcal A}$ generates an exponentially stable semigroup. Thus, Proposition \ref{WellP} and Theorem \ref{stab2} guarantee well--posedness and exponential stability, for small delays, also 
for the model  (\ref{1.1d})--(\ref{1.3d}).

\section{The case of small data with general nonlinearities}\label{secgeneralNL}

\setcounter{equation}{0}

Here we consider a more general class of nonlinearities but assume that
$\caA$ is a negative selfadjoint operator in $\caH$.
In this case, $\caA$  generates an analytic semi-group
(see \cite[Example IX.1.25]{kato67}) and existence results for problem  (\ref{abstract}) can be obtained
for    nonlinear terms   satisfying the next hypothesis (\ref{hypoF1}).
More precisely, we recall that $\caV=D((-\caA)^\frac{1}{2})$
is a Hilbert space with the norm
\[
\|U\|_\caV^2=((-\caA)^\frac{1}{2}U, (-\caA)^\frac{1}{2}U),\  \forall\  U\in \caV.
\]
Furthermore if $\lambda_1$ is the smallest eigenvalue of $-\caA$, we have
\be\label{Poincare}
\lambda_1 \|U\|_\caH^2\leq  \|U\|_\caV^2, \quad \forall\  U\in \caV.
\ee
Then we assume that
there exist a positive real number $\beta<\frac12$, a  constant $C_0$ and two continuous functions
$h_1,h_2$ from $[0,\infty)^4$ to $[0,\infty)$  such that, for all $i=1, \dots, I,$
\beq\label{hypoF1}
&&\Vert F_i(U_1, V_1)-F_i(U_2,V_2)\Vert_\caH \le C_0\Vert U_1-U_2\Vert_\caH
\\
\nonumber
&&\hspace{1 cm}+h_1(\Vert U_1\Vert_\caV, \Vert U_2\Vert_\caV,\Vert V_1\Vert_\caV,\Vert V_2\Vert_\caV)
 \Vert U_1 -U_2\Vert_\caV
\\
\nonumber
&&\hspace{1 cm}+h_2(\Vert U_1\Vert_\caV, \Vert U_2\Vert_\caV,\Vert V_1\Vert_\caV,\Vert V_2\Vert_\caV)
 \Vert V_1 -V_2\Vert_{D((-\caA)^\beta)}\,,
\eeq
for all $(U_1, U_2), (V_1, V_2) \in \caV\times\caV$.

The following local  existence result holds
(compare with Theorem 1 of \cite{Oliva:99}).


\begin{Proposition}\label{WellP*}
Assume that the functions  $F_i$ satisfy {\rm (\ref{hypoF1})}.
Then for any initial datum  $U_0\in C ([0,\tau];\caV)\cap C^{0,\theta}([0, \tau], D((-\caA)^
\beta))$,
with $\beta<\frac12$ from the assumption $(\ref{hypoF1})$ and $\theta=\min\{\beta, \frac{1}{2}-\beta\}$, there exist a time $T_\infty \in (0, +\infty ]$ and a unique  solution
$U\in C([0, T_\infty ), \caV)\cap C^1((0, T_\infty ), \caH )$  of problem $(\ref{abstract}).$
\end{Proposition}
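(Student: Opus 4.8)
The plan is to argue by the standard local-existence machinery for semilinear parabolic equations (cf. Pazy, Ch.~6, and Henry's book on semilinear parabolic equations), combined with the step-by-step device already used in the proof of Proposition~\ref{WellP}. Since $\caA$ is negative selfadjoint it generates an analytic semigroup $(S(t))_{t\ge 0}$, and the fractional powers $(-\caA)^\gamma$ are well defined with the usual smoothing estimates $\|(-\caA)^\gamma S(t)\|_{\caL(\caH)}\le C_\gamma t^{-\gamma}e^{-\omega t}$ for $t>0$. The key point is that on the first interval $(0,\tau_{\rm min}]$ the delayed arguments $U(\cdot-\tau_i)$ are \emph{known} functions, namely translates of the datum $U_0$, which by hypothesis lie in $C([0,\tau];\caV)\cap C^{0,\theta}([0,\tau],D((-\caA)^\beta))$; in particular $s\mapsto F_i(U(s),U(s-\tau_i))$, as a function of the unknown $U$, is locally Lipschitz from $\caV$ into $\caH$ by $(\ref{hypoF1})$, with a Hölder-in-$t$ inhomogeneous part coming from the $D((-\caA)^\beta)$-regularity of $U_0$.

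First I would set up a fixed-point argument in the space $X_{T,R}=\{U\in C([0,T],\caV):U(0)=U^0,\ \sup_{[0,T]}\|U(t)-U^0\|_\caV\le R\}$ for $T$ small and $R$ fixed, using the map $(\Phi U)(t)=S(t)U^0+\int_0^t S(t-s)\sum_i F_i(U(s),U_0(s+\tau-\tau_i))\,ds$. One estimates $\|(\Phi U)(t)\|_\caV$ by putting $(-\caA)^{1/2}$ inside the integral: $\|(-\caA)^{1/2}S(t-s)F_i(\cdots)\|_\caH\le C(t-s)^{-1/2}\|F_i(\cdots)\|_\caH$, and $(t-s)^{-1/2}$ is integrable, so $\Phi$ maps $X_{T,R}$ into itself and is a contraction for $T$ small, using $(\ref{hypoF1})$ to bound the differences $\|F_i(U_1(s),\cdot)-F_i(U_2(s),\cdot)\|_\caH\le C_0\|U_1-U_2\|_\caH+h_1(\cdots)\|U_1-U_2\|_\caV$ (note that on the first interval the $V$-variable is frozen, so the $h_2$-term drops out of the differences). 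The Hölder-in-time continuity of $U_0$ in $D((-\caA)^\beta)$, together with the choice $\theta=\min\{\beta,\frac12-\beta\}$, is exactly what is needed, via the classical regularity theorem for analytic semigroups (inhomogeneous term Hölder continuous into $\caH$), to upgrade the mild solution to a classical one, $U\in C^1((0,T),\caH)$ with $U(t)\in D(\caA)$ for $t>0$; this gives $U$ on $[0,\tau_{\rm min}]$. Then I would iterate: on $(\tau_{\rm min},2\tau_{\rm min}]$ the delayed terms are again known (now built from the solution just constructed), and one checks that the solution produced on the previous step has the required $\caV$- and Hölder-$D((-\caA)^\beta)$-regularity to serve as the new "datum", so the same local argument applies; continuing, one patches together a solution on a maximal interval $[0,T_\infty)$.

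Uniqueness follows on each subinterval from the contraction property (or a direct Gronwall argument using the integrable singularity $(t-s)^{-1/2}$), and then globally by concatenation; $T_\infty$ is defined as the supremum of existence times, which is either $+\infty$ or a finite time at which $\|U(t)\|_\caV$ blows up. The main obstacle I expect is bookkeeping the regularity that propagates from one delay-interval to the next: one must verify that the constructed solution on $[0,k\tau_{\rm min}]$ is not merely in $C([0,k\tau_{\rm min}],\caV)$ but is Hölder continuous with values in $D((-\caA)^\beta)$ on compact subintervals away from the initial points (which it is, by parabolic smoothing, since $\beta<\frac12$), so that the hypothesis on the "initial datum" needed to restart the fixed-point argument is genuinely available at each stage — in particular handling the matching at the junction points $k\tau_{\rm min}$ where the inhomogeneity $F_i$ may have a jump, which is why one only gets $C^1$ on the open interval $(0,T_\infty)$ rather than up to the junction points.
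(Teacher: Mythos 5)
Your overall strategy coincides with the paper's: reduce to a non-delayed semilinear problem on successive intervals of length $\tau_{\rm min}$, on which the delayed arguments are known functions, and invoke the standard local existence theory for analytic semigroups (you run the contraction argument by hand; the paper simply cites Theorem 6.3.1 of \cite{pazy}, which is the same mechanism). The definition of $T_\infty$ as a maximal existence time with $\caV$-blow-up criterion also matches.

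There is, however, one genuine gap, and it sits exactly at the point you yourself flag as ``the main obstacle'': the verification that the solution constructed on $[0,\tau_{\rm min}]$ belongs to $C^{0,\theta}([0,\tau_{\rm min}], D((-\caA)^\beta))$, so that it can serve as the datum for the next step. You dismiss this with ``which it is, by parabolic smoothing, since $\beta<\frac12$'' and restrict the claim to ``compact subintervals away from the initial points''. That restricted statement is not sufficient: on the second interval the delayed term $U(t-\tau_i)$ with $\tau_i=\tau_{\rm min}$ samples the first-step solution at times arbitrarily close to $0$, so the H\"older bound in $D((-\caA)^\beta)$ must be \emph{uniform up to $t=0$}, where interior parabolic smoothing gives nothing. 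This is precisely where the hypotheses $U^0\in\caV$ and $\beta<\frac12$ are consumed: writing $U(t)=(-\caA)^{-1/2}y(t)$ with $y$ the fixed point of the integral equation, the increment splits into a free part $(S(h)-I)(-\caA)^\beta S(t)U^0$, controlled by $h^{1/2-\beta}\|U^0\|_\caV$ uniformly in $t\geq 0$ via the moment inequality (2.6.26) of \cite{pazy}, and two Duhamel remainders controlled by $h^\beta$ via (6.3.15)--(6.3.16) of \cite{pazy}; this is also the computation that produces the specific exponent $\theta=\min\{\beta,\frac12-\beta\}$ appearing in the statement, rather than an unspecified ``H\"older'' regularity. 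Your proposal would be complete once this estimate is carried out; as written, the induction cannot be restarted at $t=\tau_{\rm min}$ because the required regularity of the new datum has not been established on the closed interval.
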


\noindent \begin{proof}
Like in the proof of Proposition \ref{WellP}, we look at the problem
(\ref{abstract}) in the time interval $[0,\tau_{\rm min}) $ where, as before, $\tau_{\rm min}=\min_{i=1,\ldots, I} \tau_i$.
Then the model can be rewritten as
\begin{equation}\label{local1}
\left\{
\begin{array}{ll}
& U_{t}(t) = \caA U(t)+g(t, U(t))\quad \mbox{\rm in }
(0,\tau_{\rm min}),\\
& U(0)=U^0,
\end{array}
\right.
\end{equation}
where $g(t, U(t))=\sum_{i=1}^IF_i (U(t), U(t-\tau_i))$. Recall that the terms $U(t-\tau_i)$ can be regarded as known data for $t\in [0, \tau_{\rm min})\,.$
Then owing to (\ref{hypoF1}) the nonlinear part  satisfies
 \begin{equation}\label{hypPazy}
 \begin{array}{l}
 \displaystyle{
 \Vert g(t_1, U_1)-g(t_2, U_2)\Vert_{\caH}=
 \Big\Vert
 \sum_{i=1}^I[F_i (U_1, U_0(t_1-\tau_i+\tau ))-
 F_i (U_2, U_0(t_2-\tau_i+\tau ))]\Big
\Vert_{\caH}}\\
\hspace{0,5 cm}\displaystyle{
\le \sum_{i=1}^I \Vert
F_i (U_1, U_0(t_1-\tau_i+\tau ))-
 F_i (U_2, U_0(t_2-\tau_i+\tau ))
 \Vert_{\caH}}\\
\hspace{0,5 cm} \displaystyle {\le IC_0\Vert U_1-U_2\Vert_{\caH}}\\
\hspace{1 cm}+\displaystyle{
\sum_{i=1}^I h_1 (\Vert U_1\Vert_\caV ,  \Vert
U_2\Vert_\caV , \Vert U_0(t_1-\tau_i+\tau )\Vert_\caV , \Vert U_0(t_2-\tau_i+\tau )\Vert_\caV  )\Vert U_1-U_2\Vert_\caV\,}\\
\hspace{0,5 cm} \displaystyle {+
\sum_{i=1}^I h_2 (\Vert U_1\Vert_\caV ,  \Vert U_2\Vert_\caV , \Vert U_0(t_1-\tau_i+\tau )\Vert_\caV , \Vert U_0(t_2-\tau_i+\tau )\Vert_\caV  )\cdot }
\\
\hspace{5 cm}
\displaystyle{\cdot
\Vert U_0(t_1-\tau_i+\tau )-U_0 (t_2-\tau_i+\tau )\Vert_{D(-A^\beta )}\,.}
\end{array}
 \end{equation}
 From (\ref{hypPazy}), using (\ref{Poincare}), easily follows
$$ \Vert g(t_1, U_1)-g(t_2, U_2)\Vert_{\caH}\le
L(R)(\vert t_1-t_2\vert^\theta +\Vert U_1-U_2\Vert_\caV  )\,,$$
for all $(U_1, U_2)\in\caV\times\caV$ with
$\Vert U_1\Vert_\caV , \Vert U_2\Vert_\caV \le R$
and for all $t_1, t_2\in [0, \tau_{\rm min}]\,.$
Therefore, we can apply Theorem 6.3.1 of \cite{pazy} and deduce the existence of a unique local solution $U\in C([0,\overline t), \caV)\cap C^1((0,\overline t), \caH )$ defined in a time interval  $[0,\overline t)$ with $\overline t\le\tau_{\rm min}\,.$
If $\overline t=\tau_{\rm min},$ and $\Vert U(\tau_{\rm min})\Vert_\caV <+\infty, $ then one can extend the solution $U$ for times $t>\tau_{\rm min},$ by considering on the time interval $(\tau_{\rm min}, 2\tau_{\rm min})$ the problem
\begin{equation}\label{local2}
\left\{
\begin{array}{ll}
& U_{t}(t) = \caA U(t)+g_1(t, U(t))\quad \mbox{\rm in }
(\tau_{\rm min},2\tau_{\rm min})\\
& U(0)=U(\tau_{\rm min}^-),
\end{array}
\right.
\end{equation}
where $g_1(t, U(t))=\sum_{i=1}^IF_i (U(t), U(t-\tau_i))$. Note that, since we know the solution $U(t)$ for $t\in [0,\tau_{\rm min}]$ from the first step, the terms $U(t-\tau_i)$ can be regarded as known data for $t\in [\tau_{\rm min},2\tau_{\rm min})\,.$

Observe also that
\be\label{holdercont}
U\in C^{0,\theta }([0,\tau_{\rm min}], D(-\caA)^\beta )).
\ee
Indeed from the proof of
Theorem 6.3.1 of
\cite{pazy}, we see that
\[
U(t)=(-\caA)^{-\frac{1}{2}}y(t),
\]
with $y\in C([0,\tau_{\rm min}], \caH)$
given by
\[
y(t)=S(t)(-\caA)^{\frac{1}{2}}U^0+\int_{0}^t (-\caA)^{\frac{1}{2}} S(t-s)g(s, (-\caA)^{-\frac{1}{2}}y(s))\,ds,
\forall t\in [0,\tau_{\rm min}].
\]
Hence for any $t\in [0,\tau_{\rm min})$ and $h>0$ such that
$t+h\leq \tau_{\rm min}$, we have
\[
y(t+h)-y(t)=
(S(h)-I)S(t)(-\caA)^{\frac{1}{2}}U^0+R_2(t,h)+R_3(t,h),
\]
where
\beqs
R_2(t,h)&=&
\int_{0}^t (S(h)-I)(-\caA)^{\frac{1}{2}} S(t-s)g(s, (-\caA)^{-\frac{1}{2}}y(s))\,ds
\\
R_3(t,h)&=&\int_t^{t+h}  (-\caA)^{\frac{1}{2}} S(t+h-s)g(s, (-\caA)^{-\frac{1}{2}}y(s))\,ds.
\eeqs
Accordingly, we have
\beqs
(-\caA)^{\beta}(U(t+h)-U(t))
&=&(-\caA)^{\beta-\frac{1}{2}}(y(t+h)-y(t))
\\
&=&
(S(h)-I) (-\caA)^{\beta} S(t)U^0+(-\caA)^{\beta-\frac{1}{2}}(R_2(t,h)+R_3(t,h)),
\eeqs
and since $(-\caA)^{\beta-\frac{1}{2}}$ is a bounded operator from $\caH$ into itself (see
\cite[Lemma 2.6.3]{pazy}), one gets
\be\label{estholder1}
\|(-\caA)^{\beta}(U(t+h)-U(t))\|_\caH
\lesssim \|(S(h)-I) (-\caA)^{\beta} S(t)U^0\|_\caH+\|R_2(t,h)\|_\caH+\|R_3(t,h)\|_\caH.
\ee
The estimates (6.3.15) and (6.3.16) of \cite{pazy} give
\be \label{estholderremainder}
\|R_2(t,h)\|_\caH+\|R_3(t,h)\|_\caH\lesssim h^\beta\lesssim h^\theta,
\ee
hence to get the H\"older continuity (\ref{holdercont}), it remains to show that
\be \label{estholder2}
\|(S(h)-I) (-\caA)^{\beta} S(t)U^0\|_\caH\lesssim h^\theta.
\ee
But we first notice that the estimates (2.6.26)  of \cite{pazy} yields
\[
\|(S(h)-I) (-\caA)^{\beta} S(t)U^0\|_\caH\lesssim h^{\frac{1}{2}-\beta}
\| (-\caA)^{\frac{1}{2}} S(t)U^0\|_\caH,
\]
and since $(-\caA)^{\frac{1}{2}} S(t)=S(t)(-\caA)^{\frac{1}{2}}$
and the semigroup is of contractions, we get
\[
\|(S(h)-I) (-\caA)^{\beta} S(t)U^0\|_\caH\lesssim h^{\frac{1}{2}-\beta}
\| (-\caA)^{\frac{1}{2}}U^0\|_\caH\lesssim h^{\frac{1}{2}-\beta} \|U^0\|_\caV,
\]
which proves (\ref{estholder2}).

Then, as before, one can apply Theorem 6.3.1 of \cite{pazy}
extending the previously found solution. One can eventually iterate this procedure by obtaining a solution
$$U\in C([0, T_\infty ), \caV)\cap C^1((0, T_\infty ), \caH )$$
  of problem $(\ref{abstract}),$  satisfying  $\lim_{t\rightarrow T_\infty ^-}\Vert U(t)\Vert_\caV =+\infty $
if $T_\infty <+\infty\,.$
\end{proof}

We now give an exponential stability result   for {\sl small} initial data. Note that we do not require here any restriction on the size of the time delays.
For that purpose, we need the additional assumption on our nonlinear functions, namely
we suppose that there exist a    positive  constant $C_1$ and a  continuous function
$h_3$ from $[0,\infty)^{I+1}$ to $[0,\infty)$  satisfying $h_3(0)=0$
and  such that
\be\label{H10bis}
 \vert \sum_{i=1}^I(W,F_i(U, V_i))_\caH\vert \le \|W\|_\caH \left(C_1\Vert U\Vert_\caH
+h_3(\Vert U\Vert_\caV,  \Vert V_1\Vert_\caV,\cdots,\Vert V_I\Vert_\caV)
 \Vert U \Vert_\caV\right),
 \ee
 for all $W\in \caH$, $U, V_i\in \caV$, $i=1,\cdots, I$.

\begin{Theorem}\label{Wellnonlineargeneral}
Assume that $(\ref{hypoF1})$ and $(\ref{H10bis})$ are satisfied.
With $C_1$ from the assumption $(\ref{H10bis}),$
we assume that
\be\label{C1general}
\frac{C_1}{\lambda_1}-1<0.
\ee
Then there exist $K_0>0$ small enough and $\gamma_0<1$ (depending on $K_0$) such that for all
$K\in (0, K_0]$ and   $U_0\in C([0,\tau];\caV )$ satisfying
\begin{equation}\label{assump}
\|U_0(t)\|_\caV<\gamma_0 K, \forall \ t\in [-\tau,0],
\end{equation}
problem $(\ref{abstract})$ has a global solution  $U$ that satisfies the exponential decay estimate
 \begin{equation}\label{exponentialdbis}
\|U(t)\|_\caH\le  M e^{-\tilde \omega t}
\quad \forall t\ge 0\,,
\end{equation}
for a   positive constant  $M$ depending on $U_0$ and a suitable  positive constant $\tilde\omega$.
 \end{Theorem}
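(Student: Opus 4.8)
The plan is to run an energy/bootstrap argument on $\|U(t)\|_\caH$ and $\|U(t)\|_\caV$ simultaneously, using the local existence from Proposition \ref{WellP*} to get a solution on a maximal interval $[0,T_\infty)$ and then showing the norms stay bounded, so $T_\infty=+\infty$. First I would test the equation with $U(t)$: since $\caA$ is negative selfadjoint, $(\caA U,U)_\caH=-\|U\|_\caV^2$, and by (\ref{H10bis}),
\be\label{energyid}
\frac12\frac{d}{dt}\|U(t)\|_\caH^2 = -\|U(t)\|_\caV^2 + \sum_{i=1}^I (U(t),F_i(U(t),U(t-\tau_i)))_\caH.
\ee
Using (\ref{H10bis}), Poincaré (\ref{Poincare}), and the assumption (\ref{C1general}) that $C_1/\lambda_1<1$, the linear part $-\|U\|_\caV^2 + C_1\|U\|_\caH\|U\|_\caV \le -\delta\|U\|_\caV^2$ for some $\delta>0$; the remaining term is controlled by $h_3(\|U\|_\caV,\|U(t-\tau_i)\|_\caV,\dots)\|U\|_\caH\|U\|_\caV$, which is a higher-order perturbation as long as the $\caV$-norms stay small (this uses $h_3(0)=0$ and continuity). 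So on any interval where $\sup_{s\in[-\tau,t]}\|U(s)\|_\caV$ is below a threshold $\rho$, one gets $\frac{d}{dt}\|U(t)\|_\caH^2 \le -2\tilde\omega\|U(t)\|_\caH^2$ for a fixed $\tilde\omega>0$, hence (\ref{exponentialdbis}) with $M$ depending on $\|U(0)\|_\caH$, which in turn is small by (\ref{assump}) and (\ref{Poincare}).

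The work is then to close the bootstrap at the level of the $\caV$-norm, i.e.\ to show that if the initial data satisfy (\ref{assump}) with $\gamma_0,K_0$ small, then $\|U(t)\|_\caV$ never reaches the threshold $\rho$. Here I would use the Duhamel representation $U(t)=S(t)U^0+\int_0^t S(t-s)\sum_i F_i(U(s),U(s-\tau_i))\,ds$ together with the analytic-semigroup smoothing estimate $\|(-\caA)^{1/2}S(t-s)\|_{\caL(\caH)}\lesssim (t-s)^{-1/2}$, and the bound on $\|F_i\|_\caH$ obtained from (\ref{hypoF1}) with $U_2=V_2=0$ (so $F_i(0,0)$-terms, which may not vanish a priori — one may need to either assume $F_i(0,0)=0$ as in Theorem \ref{stab2}, or absorb $\|F_i(0,0)\|$; in the population-dynamics applications of section \ref{Examples} the relevant $F_i(0,0)$ do vanish). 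This gives, for $t$ in the putative good interval,
\be\label{Vboot}
\|U(t)\|_\caV \le M e^{-\lambda_1 t/2}\|U^0\|_\caV + C\int_0^t (t-s)^{-1/2} e^{-\lambda_1(t-s)/2}\Big(\|U(s)\|_\caH + h(\cdot)\|U(s)\|_\caV + \cdots\Big)\,ds,
\ee
where the delayed $D((-\caA)^\beta)$-terms are handled by interpolation ($D((-\caA)^\beta)\hookrightarrow \caH$) and the fact that $\|U(s-\tau_i)\|$ for $s<\tau_i$ is controlled by the data via (\ref{assump}), and for $s\ge\tau_i$ by the already-established exponential decay of $\|U\|_\caH$. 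Feeding in the $\caH$-decay from the first step and using that the nonlinear coefficients are small when $\|U\|_\caV\le\rho$, a Gronwall-type / continuity argument yields $\|U(t)\|_\caV \le \frac12\rho$ for all $t$, strictly improving the bootstrap assumption; standard continuity of $t\mapsto\|U(t)\|_\caV$ (from Proposition \ref{WellP*}) then shows the good interval is all of $[0,+\infty)$, so $T_\infty=+\infty$.

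The main obstacle I expect is the interplay between the two norms in the delay terms: the $\caV$-estimate (\ref{Vboot}) needs control of $\|U(s-\tau_i)\|_\caV$, i.e.\ of the $\caV$-norm on a shifted interval, which is exactly the quantity being bootstrapped — so one has to set the argument up carefully over successive intervals $[k\tau_{\min},(k+1)\tau_{\min}]$ (as in the proof of Proposition \ref{WellP*}), keeping a uniform smallness threshold $\rho$ and uniform exponential rate throughout, and checking that the constants do not deteriorate with $k$. The delicate point is that the $D((-\caA)^\beta)$-norm of the delayed state appears with $\beta<\tfrac12$, so it is genuinely weaker than the $\caV$-norm and can be interpolated/absorbed, but one must track the Hölder-in-time regularity (as in (\ref{holdercont})) to make sense of these terms on each step; choosing $\gamma_0$ and $K_0$ small enough at the outset makes all the nonlinear contributions subordinate and lets the linear decay $e^{-\lambda_1 t/2}$ dominate.
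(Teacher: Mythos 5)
Your proposal is essentially correct in its architecture and its first half coincides with the paper's proof: local existence via Proposition \ref{WellP*}, a continuity/bootstrap set $T_0=\sup\{\delta:\|U(t)\|_\caV\le K \hbox{ on } (0,\delta)\}$, and the $\caH$-energy identity combined with (\ref{H10bis}), Poincar\'e and (\ref{C1general}) to get $\frac{d}{dt}\|U(t)\|_\caH^2\le -2\omega\|U(t)\|_\caV^2$ once $C_4(K)=\max_{[0,K]^{I+1}}h_3$ is small. Where you genuinely diverge is in closing the bootstrap at the $\caV$-level: you propose Duhamel plus the analytic smoothing bound $\|(-\caA)^{1/2}S(t)\|_{\caL(\caH)}\lesssim t^{-1/2}$, whereas the paper stays entirely at the energy level: it tests the equation with $U_t$, uses (\ref{H10bis}) with $W=U_t$ and Young's inequality to get $\frac{d}{dt}\|U(t)\|_\caV^2\le (C_1/\sqrt{\lambda_1}+C_4(K))^2\|U(t)\|_\caV^2$, and then closes this Gronwall step using the integrated dissipation bound $\int_0^{T_0}e^{\omega' t}\|U(t)\|_\caV^2\,dt\le C(\omega,\lambda_1)\|U(0)\|_\caV^2$ that falls out of the first energy estimate. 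That route is more elementary (no singular kernels, no H\"older-in-time bookkeeping beyond what Proposition \ref{WellP*} already provides) and yields exponential decay of the $\caV$-norm itself, which is exactly what produces the contradiction $\|U(T_0)\|_\caV\le K/2<K$. Your route can also be made to close, but two points in your write-up need repair. First, the term coming from (\ref{H10bis}) with $W=U$ is $C_1\|U\|_\caH^2$, not $C_1\|U\|_\caH\|U\|_\caV$ as you wrote; with your version Poincar\'e would require $C_1<\sqrt{\lambda_1}$, which is not hypothesis (\ref{C1general}). Second, your concern about $F_i(0,0)$ (and the ensuing reliance on (\ref{hypoF1}) in the Duhamel term) is unnecessary: applying (\ref{H10bis}) with $U=0$ forces $\sum_iF_i(0,V_i)=0$, and taking $W=\sum_iF_i(U,V_i)$ gives directly $\|\sum_iF_i(U,V_i)\|_\caH\le C_1\|U\|_\caH+h_3(\|U\|_\caV,\|V_1\|_\caV,\dots)\|U\|_\caV$, which is the only bound your Duhamel integral needs; the delayed states then enter only through the argument of $h_3$, so the ``interplay between the two norms in the delay terms'' you flag as the main obstacle reduces to the bootstrap hypothesis itself and does not require the interval-by-interval tracking you describe.
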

\begin{proof}
 By Proposition \ref{WellP*}, there exists $T_\infty>0$ such that   problem (\ref{abstract})  has a unique solution
  $U\in C([0, T_\infty), \caV)\cap C^1((0,T_\infty), \caH)$.
As in \cite{Liu:02}, for $K\in (0, K_0]$ with $K_0$ fixed later on, we look at
\be\label{defT0}
T_0=\sup\{\delta\in (0,\infty): \|U(t)\|_\caV\leq K, \forall t\in (0,\delta)\}.
\ee
Our assumption (\ref{assump}) clearly guarantees that $T_0>0$. We will now show that $T_0=+\infty$
by a contradiction argument. Indeed if we assume that $T_0$ is finite, then
by its definition, we will have
\be\label{16}
\|U(t)\|_\caV<K, \forall t\in [-\tau, T_0),
\ee
and
\be\label{17}
\|U(T_0)\|_\caV=K.
\ee
In a first step, for $t\in [0, T_0)$, we estimate
$\frac{d}{dt} \|U(t)\|_\caH^2$.
Indeed by  (\ref{abstract}), for all  $t\in [0, T_0)$,  we have
\beqs
\frac{d}{dt} \|U(t)\|_\caH^2
&=&2(U(t), U_t(t))_\caH
\\
&=&2(U(t), \caA U(t)+\sum_{i=1}^I F_i(U(t), U(t-\tau_i)) )_\caH
\\
&=&-2\|U(t)\|^2_\caV+2\sum_{i=1}^I (U(t), F_i(U(t), U(t-\tau_i)) )_\caH\,.
\eeqs
Hence using the assumption (\ref{H10bis}), we get
\beqs
\frac{d}{dt} \|U(t)\|_\caH^2
&\leq& -2\|U(t)\|^2_\caV+2 C_1\Vert U(t)\Vert_\caH^2
\\
\nonumber
&+&2h_3(\Vert U(t)\Vert_\caV,  \Vert U(t-\tau_1)\Vert_\caV,\cdots,\Vert U(t-\tau_I)\Vert_\caV)
\Vert U(t)\Vert_\caH \Vert U(t) \Vert_\caV.
\eeqs
Therefore by (\ref{16}), we deduce that
\beqs
\frac{d}{dt} \|U(t)\|_\caH^2
&\leq& -2\|U(t)\|^2_\caV+2 C_1\Vert U(t)\Vert_\caH^2
\\
\nonumber
&+&2C_4(K)
\Vert U(t)\Vert_\caH \Vert U(t) \Vert_\caV,
\eeqs
where $C_4(K)=\max_{0\leq y, z_i\leq K} h_3(y,z_1, \cdots, z_I)$ is a constant that depends on $K$
and is a non-decreasing function of $K$. Using (\ref{Poincare}), we arrive at

\beqs
\frac{d}{dt} \|U(t)\|_\caH^2
&\leq& 2\left(-1+\frac{C_1}{\lambda_1} +\frac{C_4(K)}{\sqrt{\lambda_1}}\right)
\Vert U(t)\Vert_\caV^2.
\nonumber
\eeqs

From our assumption (\ref{C1general}), we can choose $K_0$
small enough such that
\[
-\omega=\left(-1+\frac{C_1}{\lambda_1} +C_4(K_0)\right)
\]
is negative. Hence
for all $K\in (0,K_0]$, the previous estimate implies that
\be\label{22}
\frac{d}{dt} \|U(t)\|_\caH^2
\leq -2\omega
\Vert U(t)\Vert_\caV^2\leq -2\omega\lambda_1
\Vert U(t)\Vert_\caH^2, \ \forall\  t\in [0,T_0).
\ee
This estimate obviously implies that
 \be\label{23}
\|U(t)\|_\caH^2
\leq  e^{-2\omega\lambda_1t}
\Vert U(0)\Vert_\caH^2,\  \forall \ t\in [0,T_0).
\ee
But the first estimate of (\ref{22}) means that
\[
\frac{d}{dt} \|U(t)\|_\caH^2 +2\omega
\Vert U(t)\Vert_\caV^2\leq 0,
\]
and if we multiply this estimate by $e^{\omega't}$ for some $\omega'\in (0,\omega]$ fixed later on, we find that
\[
\frac{d}{dt} (e^{\omega't}\|U(t)\|_\caH^2) +2\omega'
e^{\omega't}\Vert U(t)\Vert_\caV^2\leq  \omega' e^{\omega't}\|U(t)\|_\caH^2,
\]
and using (\ref{23}), we get
\be\label{25}
\frac{d}{dt} (e^{\omega't}\|U(t)\|_\caH^2) +2\omega'
e^{\omega't}\Vert U(t)\Vert_\caV^2\leq  \omega e^{(\omega'-2\lambda_1 \omega)t}\|U(0)\|_\caH^2.
\ee
Hence   we fix $\omega'\in (0,\omega]$ such that $\omega'<2\lambda_1 \omega$ (possible since $\lambda_1$ and $\omega$
are positive), and then integrate (\ref{25}) between 0 and $T_0$
to find
\begin{eqnarray*}
e^{\omega'T_0}\|U(T_0)\|_\caH^2 +2\omega'
\int_0^{T_0}e^{\omega't}\Vert U(t)\Vert_\caV^2\,dt
&\leq& \left(1+\frac{\omega}{2\lambda_1 \omega-\omega'}  (1-e^{(\omega'-2\lambda_1 \omega)T_0})\right)
\|U(0)\|_\caH^2\\
&\leq&\left(1+\frac{\omega}{2\lambda_1 \omega-\omega'}\right)
\|U(0)\|_\caH^2.
\end{eqnarray*}
This clearly implies that
there exists a positive constant $C(\omega, \lambda_1)$ depending only on
$\omega$ and $\lambda_1$ such that
\be\label{27}
\int_0^{T_0}e^{\omega't}\Vert U(t)\Vert_\caV^2\,dt
\leq C(\omega, \lambda_1)
\|U(0)\|_\caV^2.
\ee

We now estimate
$\frac{d}{dt} \|U(t)\|_\caV^2$.
First  we notice that
\beqs
\frac{d}{dt} \|U(t)\|_\caV^2&=&2((-\caA)^\frac{1}{2}U, (-\caA)^\frac{1}{2}U_t)_\caH
\\
&=&-2(\caA U,U_t)_\caH
\\
&=&-2(U_{t}(t) -\sum_{i=1}^I F_i(U(t), U(t-\tau_i)),U_t)_\caH
\\
&=&-2\|U_{t}(t)\|_\caH^2 +2\sum_{i=1}^I (F_i(U(t), U(t-\tau_i)),U_t)_\caH\,.
\eeqs
By  the assumption (\ref{H10bis}), we get
\[
\frac{d}{dt} \|U(t)\|_\caV^2\leq
-2\|U_{t}(t)\|_\caH^2 +2\|U_{t}(t)\|_\caH
(C_1 \Vert U(t)\Vert_\caH +C_4(K)
 \Vert U(t)\Vert_\caV).
\]
Young's inequality and (\ref{Poincare}) lead to
\be\label{IBbis}
\frac{d}{dt} \|U(t)\|_\caV^2\leq
(\frac{C_1}{\sqrt{\lambda_1}} +C_4(K))^2
 \Vert U(t)\Vert_\caV^2.
\ee
Now we proceed as in the proof of Lemma 2.1 of \cite{Liu:02}, namely
we multiply  this estimate by $e^{\omega't}$ to get
\[
\frac{d}{dt} (e^{\omega't} \|U(t)\|_\caV^2)\leq
(\omega'+(\frac{C_1}{\sqrt{\lambda_1}} +C_4(K))^2)
e^{\omega't} \Vert U(t)\Vert_\caV^2.
\]
Integrating this estimate between 0 and $t\in (0,T_0]$, one obtains
\[
e^{\omega't} \|U(t)\|_\caV^2\leq \|U(0)\|_\caV^2+(\omega'+(\frac{C_1}{\sqrt{\lambda_1}} +C_4(K))^2)
\int_0^te^{\omega's} \Vert U(s)\Vert_\caV^2\,ds,
\]
and therefore owing to (\ref{27}), one finally finds
\be\label{199}
\|U(t)\|_\caV^2\leq \left(1+(\omega'+(\frac{C_1}{\sqrt{\lambda_1}} +C_4(K))^2)C(\omega, \lambda_1)\right)
\|U(0)\|_\caV^2 e^{-\omega't},\  \forall  t\in (0,T_0].
\ee
Therefore if we  define $\gamma_0>0$ by
\[
\gamma_0^{-2}= 4\left(1+(\omega'+(\frac{C_1}{\sqrt{\lambda_1}} +C_4(K_0))^2)C(\omega, \lambda_1)\right),
\]
we clearly see that $\gamma_0<1$. Furthermore for any $U_0$ satisfying
(\ref{assump}), and reminding that $0<K\leq K_0$, we have
$$
\begin{array}{l}
\displaystyle{
\left(1+(\omega'+(\frac{C_1}{\sqrt{\lambda_1}} +C_4(K))^2)C(\omega, \lambda_1)\right)
\|U(0)\|_\caV^2}\\
\hspace{1,5 cm}\displaystyle{
\leq
\left((1+(\omega'+(\frac{C_1}{\sqrt{\lambda_1}} +C_4(K_0))^2)C(\omega, \lambda_1)\right)  \gamma_0^{2} K^2
=\frac{1}{4}K^2.
}
\end{array}
$$
Consequently, the estimate (\ref{199})
guarantees that
 \be\label{203}
\|U(t)\|_\caV\leq \frac{K}{2} e^{-\frac{\omega't}{2}}, \forall  t\in (0,T_0].
\ee
In particular, on gets
\[
\|U(T_0)\|_\caV\leq \frac{K}{2},
\]
that   clearly contradicts (\ref{17}). This means that $T_0$ is infinite and
we conclude by the estimate (\ref{203}).
\end{proof}

\section{Examples with general nonlinearities}\label{Examples}

\setcounter{equation}{0}

In the whole section, $\Omega$ denotes  an arbitrary bounded domain of $\mathbb{R}^d$,  $d\geq 1$,
with a Lipschitz boundary $\Gamma$.

\subsection{Delay-diffusion equations}\label{esempio1}

We consider the semilinear diffusion equation with time delay
 \begin{eqnarray}
& &u_{t}(t) -\Delta u (t)= f(u(t), u(t-\tau ))\quad \mbox{\rm in}\ \Omega\times (0,+\infty),\label{C1.1}\\
& & Bu(x,t)=0\quad \mbox{\rm on}\ \Gamma\times (0,+\infty),\label{C1.2}\\
& &u(x,t))=u_0(x,t)\quad \mbox{\rm in}\ \Omega\times [-\tau ,0],\label{C1.3}
\end{eqnarray}
where the constant  $\tau >0$ is the time delay and the initial datum $u_0$ belongs to the space $C([0,\tau];L^2(\Omega ))$. The operator $B$ is in the form
\[
Bu=\alpha \partial_n u+\alpha^\prime u,
\]
with either $\alpha=0$ and $\alpha^\prime =1$ corresponding to the case of Dirichlet boundary conditions
or $\alpha=1$ and $\alpha^\prime \geq 0$ (with $\alpha^\prime\in L^\infty(\partial\Omega)$)
 corresponding to the case of Neumann--Robin boundary conditions.

Analogous problems have been considered by Friesecke  in \cite{Friesecke}, where an exponential stability result is obtained for small time delay, under some growth conditions on the locally Lipschitz function $f\,,$ by using a Lyapunov functional approach. We may also quote  the paper of Pao \cite{Pao:96} where a coupled system of parabolic semilinear equations with delays is studied. Using the method of upper and lower solutions to investigate existence and asymptotic behavior, sufficient conditions for stability and instability are given, under some monotonicity properties on the reaction term $f\,,$ independent of the time delays.
 In the same spirit, in \cite{Freedman-Zhao:97} the global asymptotic behavior of some quasimonotone reaction--diffusion system with delays is analyzed. A trichotomy of the global dynamics is established via linearization.

For further uses, in the case of Neumann conditions ($\alpha=1, \alpha^\prime =0$), we fix
  a  positive real parameter  $\varepsilon$ (that may depend on $f$), otherwise we set  $\varepsilon=0$.
Now we assume that the nonlinearity $f:\RR^2\rightarrow \RR$ satisfies the following assumption:
there exist a non-negative constant $\alpha_0$ (that may depend on $\varepsilon$)  and two polynomials
$P_1$ and $P_2$ (of one real variable) of degree $n_1$ and $n_2$
in the form
\[
P_i(X)=\sum_{j=1}^{n_i} \alpha_{i,j} X^j,
\]
with non negative real numbers $\alpha_{i,j}$ such that
\beq
\nonumber
&&|f(x_1,y_1)-f(x_2,y_2)+\varepsilon(x_1-x_2)|\leq \alpha_0 |x_1-x_2|+P_1(|x_1|+|y_1|+|x_2|+|y_2|)  |x_1-x_2|
\\
&&\hspace{2cm}+P_2(|x_1|+|y_1|+|x_2|+|y_2|)  |y_1-y_2|, \forall (x_1,y_1),(x_2,y_2)\in \RR^2.
\label{hypoF1b}
\eeq
In particular this assumption means that $f$ is
only locally Lipschitz.

Under this assumption, let us now show that problem (\ref{C1.1})--(\ref{C1.3}) enters in the framework of section \ref{secgeneralNL}. Indeed in such a situation, we take    $\caH =L^2(\Omega )$
and define $\caA$ as follows:
\[
D(\caA):=\{u\in H^1(\Omega): \Delta u\in L^2(\Omega) \hbox{ and  satisfying } Bu= 0 \hbox{ on } \Gamma\},
\]
and
\[
\caA u=\Delta u-\varepsilon u,\ \  \forall\  u\in D(\caA).
\]
Note that by Lemmas 1.5.3.7 and  1.5.3.9 of \cite{grisvard:85a},
  for any $u\in \{u\in H^1(\Omega): \Delta u\in L^2(\Omega)\}$, $Bu$ has a meaning (as element of $H^{-\frac{1}{2}}(\Gamma)$
  if $\alpha>0$). It is not difficult to show that
  $-\caA$ is a positive selfadjoint operator in $\caH$ since it is the Friedrichs extension of the triple
  $(\caH, \caV, a)$ where
 $\caV= H^1_0(\Omega)$ in case of Dirichlet boundary conditions otherwise
 $\caV= H^1(\Omega)$, and
 the sesquilinear form $a$ is defined by
 \[
 a(u,v)=\int_\Omega (\nabla u\cdot \nabla \bar v +\varepsilon u\cdot  \bar v)\,dx
 +\int_\Gamma \alpha' u\cdot  \bar v\,d\sigma(x),
 \]
 that is symmetric, continuous and strongly coercive on $\caV$. Before going on, let us notice that
 in the case of Neumann boundary conditions, the smallest eigenvalue $\lambda_1$ of $-\caA$ is $\varepsilon$,
otherwise  it does not depend on $\varepsilon$
and corresponds to the smallest eigenvalue of the Dirichlet problem
or to the Robin eigenvalue problem.

By introducing the function
\[
F_1(x,y)=f(x,y)+\varepsilon x,
\]
we see that problem (\ref{C1.1})--(\ref{C1.3}) can be written as (\ref{abstract}) with $I=1$, $U(t)=u(\cdot, t)$
and $U_0(t)=u_0(\cdot, t)$.
Consequently the next local existence result follows from Proposition \ref{WellP*}.

\begin{Proposition}\label{WellP*ex1}
Assume that $(\ref{hypoF1b})$ holds and that $d$ satisfies
\begin{equation}\label{cond1}
d\leq 2\Big (1+\frac{1}{n_1}\Big )
\quad
\hbox{ and }\quad  d< 2\Big (1+\frac{1}{n_2}\Big ).
\end{equation}
Then there exists $\beta\in (0,\frac12)$
such that for any initial datum
$$u_0\in C ([0,\tau];\caV)\cap C^{0,\theta}([0, \tau], D((-\caA)^
\beta)),$$  with $\theta=\min\{\beta, \frac{1}{2}-\beta\}$,  there exist a time
$T_\infty \in (0, +\infty ]$ and a unique  solution
$u\in C([0, T_\infty ), \caV)\cap C^1((0, T_\infty ), \caH)$  of problem $(\ref{C1.1})-(\ref{C1.3}).$
\end{Proposition}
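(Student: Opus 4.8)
The plan is to show that the Nemytskii map induced by $F_1(x,y)=f(x,y)+\varepsilon x$ sends $\caV\times\caV$ into $\caH=L^2(\Omega)$ and satisfies the abstract hypothesis $(\ref{hypoF1})$ for a suitable $\beta\in(0,\frac12)$ to be selected in terms of $d,n_1,n_2$; then the conclusion is immediate from Proposition \ref{WellP*} (used with $I=1$, $U(t)=u(\cdot,t)$, $U_0(t)=u_0(\cdot,t)$). Observe first that $(\ref{hypoF1b})$ is precisely a pointwise bound on $|F_1(x_1,y_1)-F_1(x_2,y_2)|$, so the whole matter is to convert it into a bound in $\caH$. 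I would do this with H\"older's inequality together with the Sobolev embeddings on the bounded Lipschitz domain $\Omega$: for $d\ge 3$ the embedding $\caV\hookrightarrow L^{2^\ast}(\Omega)$ with $2^\ast=\frac{2d}{d-2}$, and the fractional embedding $D((-\caA)^\beta)\hookrightarrow L^{r_\beta}(\Omega)$ with $r_\beta=\frac{2d}{d-4\beta}$, valid since $4\beta<d$ --- the latter being justifiable via the Gaussian bounds for the semigroup generated by $\caA$, so that no $H^2$-regularity on the merely Lipschitz domain is invoked. For $d\le 2$ both conditions in $(\ref{cond1})$ hold automatically and the required estimates only simplify, using $\caV\hookrightarrow L^q$ for every finite $q$, resp. $\caV\hookrightarrow L^\infty$ when $d=1$; I therefore focus on $d\ge 3$.

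For fixed $(U_i,V_i)\in\caV\times\caV$, $i=1,2$, with associated functions $u_i,v_i$, I would set $g=|u_1|+|u_2|+|v_1|+|v_2|$, note $\|g\|_{L^{2^\ast}}\lesssim\sum_i(\|U_i\|_\caV+\|V_i\|_\caV)$, then square $(\ref{hypoF1b})$ and integrate, so that it remains to bound $\|P_1(g)(u_1-u_2)\|_{L^2}$ and $\|P_2(g)(v_1-v_2)\|_{L^2}$ (the term $\alpha_0|u_1-u_2|$ giving directly the $C_0\|U_1-U_2\|_\caH$ summand, with $C_0=\alpha_0$). For the first product, H\"older with $\frac1{p_1}+\frac1{q_1}=\frac12$ and the choice $q_1=2^\ast$, which forces $p_1=d$, gives $\|P_1(g)(u_1-u_2)\|_{L^2}\le\|P_1(g)\|_{L^d}\|u_1-u_2\|_{L^{2^\ast}}$; since $P_1$ has degree $n_1$ and nonnegative coefficients, $\|P_1(g)\|_{L^d}$ is a polynomial expression in $\|g\|_{L^{n_1 d}}$, finite and controlled by a polynomial in $\|g\|_{L^{2^\ast}}$ exactly when $n_1 d\le 2^\ast$, i.e. $n_1(d-2)\le 2$, i.e. $d\le 2(1+\frac1{n_1})$ --- the first condition in $(\ref{cond1})$. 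Using $\|u_1-u_2\|_{L^{2^\ast}}\lesssim\|U_1-U_2\|_\caV$, this produces the $h_1(\cdot)\|U_1-U_2\|_\caV$ term with $h_1$ a polynomial, hence continuous, in the four $\caV$-norms.

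For the second product the same scheme applies, but now the factor $v_1-v_2$ must be measured in $D((-\caA)^\beta)$: H\"older with $\frac1{p_2}+\frac1{q_2}=\frac12$ requires $q_2\le r_\beta$ (to use $\|v_1-v_2\|_{L^{q_2}}\lesssim\|V_1-V_2\|_{D((-\caA)^\beta)}$) and $n_2p_2\le 2^\ast$ (to bound $\|P_2(g)\|_{L^{p_2}}$ by a polynomial in the $\caV$-norms); these are jointly compatible iff $\frac{n_2}{2^\ast}+\frac1{r_\beta}\le\frac12$, which simplifies to $n_2(d-2)\le 4\beta$. As $(\ref{cond1})$ postulates the \emph{strict} inequality $n_2(d-2)<2$, the window $\{\beta:\ n_2(d-2)\le 4\beta<2\}\cap(0,\frac12)$ is nonempty, and I would fix $\beta$ there --- this is the $\beta$ in the statement. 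With this $\beta$ everything closes: $(\ref{hypoF1})$ holds with $h_2$ again polynomial in the $\caV$-norms, and, since $\theta=\min\{\beta,\frac12-\beta\}$, Proposition \ref{WellP*} delivers $T_\infty\in(0,+\infty]$ and the unique solution $u\in C([0,T_\infty),\caV)\cap C^1((0,T_\infty),\caH)$.

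I expect the only genuine obstacle to be the exponent bookkeeping for the delayed variable. Because $V_1-V_2$ is controlled only in $D((-\caA)^\beta)$ with $\beta<\frac12$, the Sobolev gain there is $\frac{4\beta}{2d}$ rather than the full $\frac{2}{2d}$ available for the non-delayed variable; this is exactly what forces the strict inequality $d<2(1+\frac1{n_2})$ and, at the same time, pins down the admissible range of $\beta$. The remaining ingredients are routine: $\varepsilon$ (needed only in the Neumann case, to make $-\caA$ strictly positive) is harmlessly built into $F_1$, and one should just take care to quote the fractional embedding $D((-\caA)^\beta)\hookrightarrow L^{r_\beta}(\Omega)$ in a form valid on a Lipschitz domain.
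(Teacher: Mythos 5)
Your proof is correct and follows essentially the same route as the paper: you verify hypothesis (\ref{hypoF1}) for $F_1$ by combining the pointwise bound (\ref{hypoF1b}) with H\"older's inequality and the Sobolev embeddings $\caV\hookrightarrow L^{2^\ast}(\Omega)$ and $D((-\caA)^\beta)\hookrightarrow L^{2d/(d-4\beta)}(\Omega)$, choose $\beta$ close enough to $\frac12$ so that the exponent arithmetic reduces exactly to (\ref{cond1}), and then invoke Proposition \ref{WellP*}. The only cosmetic differences are that you fix the extremal H\"older exponents ($q_1=2^\ast$, $p_1=d$) from the outset and justify the fractional embedding via heat-kernel bounds, whereas the paper keeps the exponents $p_i,q_i$ general and obtains $D((-\caA)^\beta)\hookrightarrow H^{2\beta}(\Omega)$ by interpolating between $D((-\caA)^{1/2})\hookrightarrow H^1(\Omega)$ and $L^2(\Omega)$.
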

\begin{proof}
It suffices to check that $F_1$ defined above satisfies (\ref{hypoF1}). For that purpose,
we see that (\ref{hypoF1b}) implies that (for shortness we write $F$ instead of $F_1$)
\beqs
\nonumber
&&|F(x_1,y_1)-F(x_2,y_2)|\leq \alpha_0  |x_1-x_2|+P_1(|x_1|+|y_1|+|x_2|+|y_2|)  |x_1-x_2|
\\
&&\hspace{2cm}+P_2(|x_1|+|y_1|+|x_2|+|y_2|)  |y_1-y_2|, \forall (x_1,y_1),(x_2,y_2)\in \RR^2.
\eeqs
and consequently for any $u_i, v_i\in \caV$,
\beqs
\nonumber
&&\|F(u_1,v_1)-F(u_2,v_2)\|_\caH\leq \alpha_0 \|u_1-u_2\|_\caH+\|P_1(|u_1|+|v_1|+|u_2|+|v_2|)  |u_1-u_2|\|_\caH
\\
&&\hspace{2cm}
+\|P_2(|u_1|+|v_1|+|u_2|+|v_2|)  |v_1-v_2|\|_\caH.
\eeqs
Hence by the form of $P_i$
and H\"older's inequality, we arrive at
\beqs
\nonumber
\|F(u_1,v_1)-F(u_2,v_2)\|_\caH&\leq& \alpha_0 \|u_1-u_2\|_\caH\\
&+&C\sum_{j=1}^{n_1}\|(|u_1|+|v_1|+|u_2|+|v_2|)^j\|_{L^{q_1}(\Omega)}  \|u_1-u_2|\|_{L^{p_1}(\Omega)}
\\
&+&
C\sum_{j=1}^{n_2}\|(|u_1|+|v_1|+|u_2|+|v_2|)^j\|_{L^{q_2}(\Omega)} \|v_1-v_2\|_{L^{p_2}(\Omega)},
\eeqs
for some $C>0$ and $p_i, q_i>2$ chosen below and such that
\[
\frac{1}{p_i}+\frac{1}{q_i}=\frac{1}{2},\ \  i=1,2.
\]
Now as $D((-\caA)^{\frac12}) \hookrightarrow H^1(\Omega)$ \footnote{by $X\hookrightarrow Y$, we mean that $X$   is continuously embedded into $Y$}
and $D((-\caA)^{0})=L^2(\Omega)$, by interpolation
we get that
\[
D((-\caA)^
\beta)\hookrightarrow  H^{2\beta}(\Omega), \ \ \forall\  \beta \in [0,\frac12].
\]
Then
we notice that the Sobolev embedding theorem guarantees that
$H^1(\Omega)\hookrightarrow L^{p_1}(\Omega)$ and $H^{2\beta}(\Omega)\hookrightarrow L^{p_2}(\Omega)$
as soon as
\[
1-\frac{d}{2}\geq -\frac{d}{p_1} \hbox{ and } 2\beta-\frac{d}{2}\geq -\frac{d}{p_2}.
\]
Since $\beta$ can be taken as close to $\frac12$ as we want, we get equivalently
\be\label{sn:cond1}
1-\frac{d}{2}\geq -\frac{d}{p_1} \hbox{ and } 1-\frac{d}{2}> -\frac{d}{p_2}.
\ee
In a second step we now need to estimate terms like
\[
\|(|u_1|+|v_1|+|u_2|+|v_2|)^j\|_{L^{q_i}(\Omega)}
\]
for all $1\leq j\leq n_i$. But clearly we have
\[
\|(|u_1|+|v_1|+|u_2|+|v_2|)^j\|_{L^{q_i}(\Omega)}\leq C_2  (\|u_1\|_{L^{jq_i}(\Omega)}^j+\|v_1\|_{L^{jq_i}(\Omega)}^j
+\|u_2\|_{L^{jq_i}(\Omega)}^j+\|v_2\|_{L^{jq_i}(\Omega)}^j),
\]
for some $C_2>0$ (that depends on $j$). Therefore our last assumptions are
\[
1-\frac{d}{2}\geq -\frac{d}{jq_i}, \ \  \forall \ 1\leq j\leq n_i,
\]
or equivalently (since $p_i$ and $q_i$ are conjugates)
\be\label{sn:cond2}
1-\frac{d}{2}\geq \frac{d}{j} (\frac{1}{p_i}-\frac12),\ \   \forall\  1\leq j\leq n_i.
\ee
This condition and (\ref{sn:cond1}) guarantee that
\beqs
\nonumber
\|F(u_1,v_1)-F(u_2,v_2)\|_\caH&\leq& \alpha_0  \|u_1-u_2\|_\caH+C\sum_{j=1}^{n_1}(\|u_1\|_{\caV}^j+\|v_1\|_{\caV}^j
+\|u_2\|_{\caV}^j)  \|u_1-u_2|\|_{\caV}
\\
&+&
C\sum_{j=1}^{n_2}(\|u_1\|_{\caV}^j+\|v_1\|_{\caV}^j
+\|u_2\|_{\caV}^j) \|v_1-v_2\|_{D((-\caA)^\beta ) },
\eeqs
for some $C>0$ and some $\beta\in (0,\frac12)$, which yields (\ref{hypoF1}) where $C_0=
\alpha_0$ and $h_i$
are polynomials with positive coefficients.

Finally it is an easy exercise to check that
conditions (\ref{cond1}) are equivalent to the existence
of $p_1>2$ and  $p_2>2$ satisfying (\ref{sn:cond1}) and (\ref{sn:cond2}).
\end{proof}

Concerning global existence and exponential decay, owing to Theorem \ref{Wellnonlineargeneral}, we can state the next result.

\begin{Theorem}\label{Wellnonlineargeneral:ex1}
Assume that conditions $(\ref{hypoF1b}), (\ref{cond1})$ as well as
\be\label{cond3}
f(0,y)=0, \ \ \forall\  y\in \RR,
\ee
are satisfied.
With $\alpha_0$ from the assumption $(\ref{hypoF1b}),$
we assume that
\be\label{C1general:ex1Neu}
\alpha_0<\varepsilon,
\ee
in the case of Neumann boundary conditions, and
\be\label{C1general:ex1}
\alpha_0<\lambda_1,
\ee
otherwise.
Then there exist $K_0>0$ small enough and $\gamma_0<1$ (depending on $K_0$) such that for all
$K\in (0, K_0]$ and   $u_0\in C([0,\tau];\caV )$ satisfying
\begin{equation}\label{assump:ex1}
\|u_0(t)\|_\caV<\gamma_0 K, \ \ \forall\  t\in [-\tau,0],
\end{equation}
problem $(\ref{C1.1})-(\ref{C1.3})$ has a global solution  $u$ that satisfies the exponential decay estimate
 \begin{equation}\label{exponentialdbis:ex1}
\|u(t)\|_\caH\le  M e^{-\tilde \omega t}
\quad \forall \ t\ge 0\,,
\end{equation}
for a   positive constant  $M$ depending on $u_0$ and a suitable  positive constant $\tilde\omega$.
 \end{Theorem}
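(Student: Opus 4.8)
The plan is to deduce the theorem directly from Theorem \ref{Wellnonlineargeneral}, applied with $I=1$ and $F_1(x,y)=f(x,y)+\varepsilon x$. Proposition \ref{WellP*ex1} (whose proof uses exactly the conditions $(\ref{cond1})$) already shows that this $F_1$ satisfies $(\ref{hypoF1})$ for a suitable $\beta\in(0,\frac12)$, with $C_0=\alpha_0$. Hence it remains only to verify the second structural hypothesis $(\ref{H10bis})$, to identify the constant $C_1$ occurring in it, and to check that $(\ref{C1general})$ translates into $(\ref{C1general:ex1Neu})$ or $(\ref{C1general:ex1})$ according to the boundary conditions imposed.

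To check $(\ref{H10bis})$ I would first invoke $(\ref{cond3})$, which gives $F_1(0,y)=f(0,y)+0=0$ for every $y$. Applying $(\ref{hypoF1b})$ with $(x_1,y_1)=(U(x),V(x))$ and $(x_2,y_2)=(0,V(x))$ then yields the pointwise bound
\[
|F_1(U(x),V(x))|\le \alpha_0\,|U(x)|+P_1\big(|U(x)|+2|V(x)|\big)\,|U(x)|,
\]
whence, by the Cauchy--Schwarz inequality, for all $W\in\caH$ and $U,V\in\caV$,
\[
\big|(W,F_1(U,V))_\caH\big|\le \|W\|_\caH\Big(\alpha_0\|U\|_\caH+\big\|\,P_1(|U|+2|V|)\,|U|\,\big\|_\caH\Big).
\]
The remaining $L^2$-norm is estimated exactly as in the proof of Proposition \ref{WellP*ex1}: expanding $P_1$, applying Hölder's inequality with conjugate exponents, and using the Sobolev embeddings of $\caV$ into the relevant $L^p$-spaces guaranteed by $(\ref{cond1})$, one obtains
\[
\big\|\,P_1(|U|+2|V|)\,|U|\,\big\|_\caH\le C\sum_{j=1}^{n_1}\big(\|U\|_\caV^{j}+\|V\|_\caV^{j}\big)\,\|U\|_\caV .
\]
Therefore $(\ref{H10bis})$ holds with $C_1=\alpha_0$ and $h_3(a,b)=C\sum_{j=1}^{n_1}(a^j+b^j)$, which is continuous and satisfies $h_3(0)=0$ since $P_1$ has no constant term.

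With $C_1=\alpha_0$, hypothesis $(\ref{C1general})$ of Theorem \ref{Wellnonlineargeneral} becomes $\alpha_0<\lambda_1$. As recalled just before Proposition \ref{WellP*ex1}, in the Neumann case $\alpha=1$, $\alpha'=0$ the smallest eigenvalue of $-\caA$ is $\lambda_1=\varepsilon$, so this is precisely $(\ref{C1general:ex1Neu})$, while in the Dirichlet and Robin cases $\lambda_1$ is the smallest eigenvalue of the corresponding problem and does not depend on $\varepsilon$, which gives $(\ref{C1general:ex1})$. Theorem \ref{Wellnonlineargeneral} then applies and produces directly the constants $K_0>0$ and $\gamma_0<1$, the global solution $u$ for initial data satisfying $(\ref{assump:ex1})$, and the decay estimate $(\ref{exponentialdbis:ex1})$.

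No step is genuinely hard: the integrability bookkeeping is identical to that already carried out for Proposition \ref{WellP*ex1}. The only points that need care are that $(\ref{cond3})$ must be used in the strong form $f(0,\cdot)\equiv 0$, so that \emph{both} terms on the right-hand side of $(\ref{H10bis})$ vanish when $U=0$ as the hypothesis demands, and that the shift parameter $\varepsilon$ is treated consistently --- it is the quantity appearing in $(\ref{hypoF1b})$ and, in the Neumann case, it simultaneously equals $\lambda_1$, so one should ensure that no circular dependence between $\varepsilon$ and $\alpha_0$ is introduced.
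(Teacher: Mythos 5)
Your proposal is correct and follows essentially the same route as the paper: the paper's proof likewise reduces everything to Theorem \ref{Wellnonlineargeneral} by verifying (\ref{H10bis}) via Cauchy--Schwarz together with (\ref{cond3}) and (\ref{hypoF1b}), taking $C_1=\alpha_0$, and then observing that (\ref{C1general}) becomes (\ref{C1general:ex1Neu}) in the Neumann case and (\ref{C1general:ex1}) otherwise. You merely spell out the H\"older/Sobolev bookkeeping for the $P_1$-term (which the paper leaves implicit, as it is identical to the proof of Proposition \ref{WellP*ex1}), and your identification of $h_3$ with $h_3(0)=0$ because $P_1$ has no constant term is exactly what is needed.
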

\begin{proof}
It suffices to check that (\ref{H10bis}) and (\ref{C1general}) hold.
The first condition directly follows from Cauchy-Schwarz's inequality and the assumptions (\ref{cond3}) and (\ref{hypoF1b})
where $C_1=\alpha_0.$ Then the second condition (\ref{C1general})
simply becomes either
(\ref{C1general:ex1Neu}) in the Neumann case or (\ref{C1general:ex1}) in the two other cases.
\end{proof}

Our general setting covers a very large number of concrete examples.
Let us mention the following cases:
\\ 1. {\bf Diffusive logistic equations with delay}.
In that case, $f$ is given by
\[
f(x,y)=a x-b x^2+c xy,
\]
with $a,b,c\in L^\infty(\Omega)$. This example covers  the Hutchinson equation by taking $a=\alpha\in \RR$,
$c=-\alpha$ and $b=0$.
 In such a case, the condition (\ref{hypoF1b}) holds with
$\alpha_0=\sup_\Omega |a+\varepsilon|$, and $n_1=n_2=1$.
Hence local existence (i.e. Theorem \ref{WellP*ex1}) holds for any $d\leq 3$, while
exponentiel decay for sufficiently small initial data (i.e. Theorem \ref{Wellnonlineargeneral:ex1}) holds under the additional assumption
that
\be\label{condaNeu}
\sup_\Omega a<0,
\ee
in the case of Neumann boundary conditions (by chosing $\varepsilon>0$ large enough), and
\be\label{condaDir}
\sup_\Omega |a| <\lambda_1,
\ee
in the other cases.
\\ 2. {\bf The modified Hutchinson equation}.
In that case, $f$ is given by (see \cite{Memory:91,Friesecke})
\[
f(x,y)=\alpha x (1+\beta y+\gamma y^2+\delta y^3),
\]
with $\alpha, \beta, \gamma, \delta\in \RR$.
 In such a case, the condition (\ref{hypoF1b}) holds with
$\alpha_0=|\alpha+\varepsilon|$, and $n_1=n_2=3$.
Hence local existence (i.e. Theorem \ref{WellP*ex1}) holds for any $d\leq 2$, while
exponentiel decay for sufficiently small initial data (i.e. Theorem \ref{Wellnonlineargeneral:ex1}) holds under the additional assumption
$\alpha<0$
in the case of Neumann boundary conditions, and
$-\lambda_1<\alpha<\lambda_1$
in the other cases.
\\ 2. {\bf A cubic nonlinearity}.
The case where  $f$ is given by
\[
f(x,y)=-x^2 y
\]
considered in \cite{Inoueetall:77,Lighbourne:81,Friesecke} is also covered by our setting
in the case of Dirichlet or Robin boundary conditions,
since (\ref{hypoF1b}) holds with
$\alpha_0=0$ and $n_1=n_2=2$. Therefore local existence  and
exponentiel decay for sufficiently small initial data hold for any $d\leq 2$.

\begin{Remark}{\rm
If $f$ is globally Lipschitz, i.e., $P_1=0$
and $P_2$ is a positive constant, we can alternatively use the theory from section \ref{pbform} and obtain exponential decay for small delays.
}\end{Remark}

\subsection{Predator-prey or two species competition systems with delays}\label{wavedamp}

Here we consider the semilinear diffusion system with time delays
 \begin{eqnarray}
& &u_{1,t}(x,t) =d_1\Delta u_1(x,t) +u_1(x,t)\Big(a_1+a_{11} u_1(x,t)\nonumber
\\
&&\hspace{3cm}+\sum_{j=1}^2a'_{1j} u_j(x,t-\tau_{1j})\Big)
\quad \mbox{\rm in}\ \Omega\times (0,+\infty),\label{C1.1pp}\\
& &u_{2,t} (x,t)=d_2\Delta u_2(x,t) +u_2(a_2+a_{22} u_2(x,t)
\nonumber
\\
&&\hspace{3cm}+\sum_{j=1}^2a'_{2j} u_j(x,t-\tau_{2j})\Big)
\quad \mbox{\rm in}\ \Omega\times (0,+\infty),\label{C1.1pp/2}\\
& & B_1u_1(x,t)=B_2u_2(x,t)=0\quad \mbox{\rm on}\ \Gamma\times (0,+\infty),\label{C1.2pp}\\
& &(u_1(x,t),u_2(x,t))=(u_{0,1}(x,t),u_{0,2}(x,t))\quad \mbox{\rm in}\ \Omega\times [-\tau ,0],\label{C1.3pp}
\end{eqnarray}
where the constants  $\tau_{ij} >0$ are the time delays, $\tau=\max \tau_{ij}$ and the initial datum $(u_{0,1},u_{0,2})^\top$ belongs to $C([0,\tau];L^2(\Omega )^2)$. The operator $B_i$ are in the form
\[
B_iu=\alpha_i \partial_n u_i+\beta_i u_i,
\]
with either $\alpha_i=0$ and $\beta_i=1$ corresponding to the case of Dirichlet boundary conditions
or $\alpha_i=1$ and $\beta_i\geq 0$ (with $\beta_i\in L^\infty(\partial\Omega)$)
 corresponding to the case of Neumann-Robin boundary conditions.
Here $d_i$ are positive constants, while $a_i, a_{ij}$ and $a'_{ij}$ are simply functions in $L^\infty(\Omega)$.

Systems for competitive species are studied in \cite{Pao:96} under some monotonicity properties on the nonlinear functions. Conditions for stability or instability of solutions are given independently of the time delays. Two species prey--predator models and competition system with delays are also analyzed in
 \cite{Ruan-Zhao:99}. By using the infinite--dimensional dissipative  system theory and comparison arguments, persistence criteria and also global extinction criteria are established.

System (\ref{C1.1pp})-(\ref{C1.3pp}) enters in the   framework of section \ref{secgeneralNL}. Indeed in such a situation, we take    $\caH =L^2(\Omega)^2$
and define $\caA$ as follows:
\[
D(\caA):=\{u=(u_1,u_2)^\top\in H^1(\Omega)^2: \Delta u_i\in L^2(\Omega) \hbox{ and  satisfying } B_iu_i= 0 \hbox{ on } \Gamma, i=1,2\},
\]
and
\[
\caA u=(d_1\Delta u_1-\varepsilon_1 u_1,d_2\Delta u_2-\varepsilon_2 u_2)^\top, \ \ \forall \
u\in D(\caA),
\]
where $\varepsilon_i>0$ if Neumann condition is imposed on $u_i$ (fixed later on), and $\varepsilon_i=0$ otherwise.
As before, $-\caA$ is a positive selfadjoint operator in $\caH$ with
\[
\caV=D((-\caA)^{\frac{1}{2}})= H^1_0(\Omega)^2,
\]
if Dirichlet boundary conditions are imposed on $u_1$ and $u_2$,
\[
\caV=D((-\caA)^{\frac{1}{2}})= H^1_0(\Omega)\times  H^1(\Omega)
\]
if Dirichlet boundary condition is imposed on $u_1$ and Neumann or Robin type on $u_2$,
\[
\caV=D((-\caA)^{\frac{1}{2}})= H^1(\Omega)\times  H^1_0(\Omega)
\]
if Dirichlet boundary condition is imposed on $u_2$ and Neumann or Robin type on $u_1$,
and finally
\[
\caV=D((-\caA)^{\frac{1}{2}})= H^1(\Omega)^2,
\]
otherwise.

We now distinguish four different cases:
\\
{\bf Case 1:} when Neumann boundary conditions are  imposed on both $u_1$ and $u_2$,
then  we take $\varepsilon_1=\varepsilon_2=\varepsilon$
and
the smallest eigenvalue $\lambda_1$ of $-\caA$ is $\varepsilon$.
\\
{\bf Case 2:} when Neumann boundary condition is imposed on $u_2$ and Dirichlet or Robin type on $u_1$,
then the smallest eigenvalue $\lambda_1$ of $-\caA$ is equal to $\min\{\mu_1,\varepsilon_2\}$,
where $\mu_1>0$ corresponds to the smallest eigenvalue of $-d_1\Delta$ with Dirichlet or Robin boundary conditions.
\\
{\bf Case 3:} when Neumann boundary condition is imposed $u_1$ and Dirichlet or Robin type on $u_2$,
then the smallest eigenvalue $\lambda_1$ of $-\caA$ is equal to $\min\{\mu_2,\varepsilon_1\}$,
where $\mu_2>0$ corresponds to the smallest eigenvalue of $-d_2\Delta$ with Dirichlet or Robin boundary conditions.
\\
{\bf Case 4:} when Dirichlet or Robin boundary conditions are imposed on both $u_1$ and  $u_2$,
then the smallest eigenvalue $\lambda_1$ of $-\caA$ is equal to $\min\{\mu_1,\mu_2\}$.

Without loss of generality we can suppose that
$\tau=\max\{\tau_{11}, \tau_{12}\}$
and therefore we set
\[
F_1(t, u,v)= ((a_1+\varepsilon_1) u_1+a_{11} u_1^2+a'_{11} u_1 v_1, 0)^\top,
\]
if $\tau_{11}>\tau_{12}$, and
\[
F_1(t, u,v)= ((a_1+\varepsilon_1) u_1+a_{11} u_1^2+a'_{12} u_1 v_2, 0)^\top,
\]
if $\tau_{11}<\tau_{12}$. Then  in the first case, we set $\tau_2=\tau_{21}$, $\tau_3=\tau_{12}$, $\tau_4=\tau_{22}$ and
for all $u=(u_1,u_2), v=(v_1,v_2)\in \mathbb{C}^2$:
\beqs
F_2(t,u,v)&=&(0,(a_2+\varepsilon_2)  u_2+a_{22} u_2^2+a'_{21} u_2v_1)^\top,\\
F_3(t,u,v)&=&(a'_{12} u_1  v_2, 0)^\top,\\
F_4(t,u,v)&=&(0,a'_{22} u_2v_2)^\top.
\eeqs
The second case is similar by simply changing $\tau_3$ and $F_3$ accordingly.
With these notations,  we see that problem (\ref{C1.1pp})-(\ref{C1.3pp}) can be written as (\ref{abstract}) with $I=2$, $U(t)=(u_1(\cdot, t), u_2(\cdot, t))^\top$
and $U_0(t)=(u_{0,1}(\cdot, t), u_{0,2}(\cdot, t))^\top$.
Consequently the next local existence result follows from Proposition \ref{WellP*}.

\begin{Proposition}\label{WellP*expp}
Assume that $d\leq 3$,
then there exists $\beta\in (0,\frac12)$
such that for any initial datum  $(u_{0,1},u_{0,2})^\top\in C ([0,\tau];\caV)\cap C^{0,\theta}([0, \tau], D((-\caA)^
\beta))$,  with $\theta=\min\{\beta, \frac{1}{2}-\beta\}$,  there exist a time $T_\infty \in (0, +\infty ]$ and a unique  solution
$(u_1,u_2)^\top\in C([0, T_\infty ), \caV)\cap C^1((0, T_\infty ), \caH)$  of problem $(\ref{C1.1pp})-(\ref{C1.3pp}).$
\end{Proposition}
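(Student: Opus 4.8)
The plan is to reduce the statement to the abstract local existence result, Proposition~\ref{WellP*}. Problem (\ref{C1.1pp})--(\ref{C1.3pp}) has already been recast in the form (\ref{abstract}) with $\caH=L^2(\Omega)^2$, with $\caV$ equal to one of the four product spaces listed above, and with $F_1,F_2,F_3,F_4$ the quadratic maps written just before the statement; hence it suffices to verify that each $F_i$ satisfies the structural hypothesis (\ref{hypoF1}) for some $\beta\in(0,\frac12)$. This verification is a repetition of the one carried out in the proof of Proposition~\ref{WellP*ex1}, with the polynomials $P_1,P_2$ there replaced by polynomials of degree one; I only indicate the changes.

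First I would expand each increment $F_i(U_1,V_1)-F_i(U_2,V_2)$ into a sum of elementary terms. Every component of every $F_i$ is a sum of a linear term $c\,w$, with $c\in L^\infty(\Omega)$ and $w$ a component of the current state $U$, and of a quadratic term which is either $c\,w^2$ or $c\,w\,z$, where $w$ is a component of $U$ and $z$ a component of $U$ or of the delayed state $V$. The linear terms are estimated directly in $\caH$ and produce the constant $C_0$ of (\ref{hypoF1}); one may take $C_0=\|a_1+\varepsilon_1\|_{L^\infty(\Omega)}+\|a_2+\varepsilon_2\|_{L^\infty(\Omega)}$. For the quadratic terms I would use
\[
w_1^2-w_2^2=(w_1+w_2)(w_1-w_2),\qquad
w_1z_1-w_2z_2=w_1(z_1-z_2)+(w_1-w_2)z_2,
\]
and estimate each resulting product in $L^2(\Omega)$ by H\"older's inequality with exponents $\frac1p+\frac1q=\frac12$. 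A factor carrying a difference of \emph{current} states is then, via $\caV\hookrightarrow L^r(\Omega)^2$, placed into the $h_1\|U_1-U_2\|_\caV$ term of (\ref{hypoF1}); a factor carrying a difference of \emph{delayed} states is placed, via the interpolation embedding $D((-\caA)^\beta)\hookrightarrow H^{2\beta}(\Omega)^2\hookrightarrow L^r(\Omega)^2$ already used for Proposition~\ref{WellP*ex1}, into the $h_2\|V_1-V_2\|_{D((-\caA)^\beta)}$ term; the remaining factors, being $L^p$-norms of single state components and hence bounded by $\caV$-norms, are absorbed into $h_1$ and $h_2$, which turn out to be polynomials of degree one with non-negative coefficients.

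It remains to see that the exponents $p,q,r,\beta$ can indeed be chosen when $d\le 3$. The products that occur require only the embeddings $H^1(\Omega)\hookrightarrow L^4(\Omega)$ (used for $w^2$ and for products of two current states) and $H^1(\Omega)\hookrightarrow L^6(\Omega)$ together with $H^{2\beta}(\Omega)\hookrightarrow L^3(\Omega)$ (used for a product of a current state with a delayed state, split as $L^6\times L^3$). For $d\le 3$ the first two embeddings hold, and the third holds as soon as $2\beta\ge\frac12$; thus any $\beta\in[\frac14,\frac12)$ works -- which is exactly condition (\ref{cond1}) for $n_1=n_2=1$, namely $d\le 4$ and $d<4$, hence $d\le 3$. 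With (\ref{hypoF1}) established for such a $\beta$, Proposition~\ref{WellP*} immediately yields the time $T_\infty\in(0,+\infty]$ and the unique solution $(u_1,u_2)^\top\in C([0,T_\infty),\caV)\cap C^1((0,T_\infty),\caH)$.

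I expect the only point requiring some attention -- routine in itself -- to be the bookkeeping of which of the three norms $\|\cdot\|_\caH$, $\|\cdot\|_\caV$, $\|\cdot\|_{D((-\caA)^\beta)}$ each factor is measured in, so that the final bound genuinely has the shape (\ref{hypoF1}): in particular, a product of a current state with a delayed state must be split so that the current state carries the larger Lebesgue exponent, leaving the delayed-state difference in the weakest norm $\|\cdot\|_{D((-\caA)^\beta)}$ (this is precisely where the strict inequality $d<4$, i.e. the impossibility of $\beta=\frac12$, enters). Everything else is the same application of H\"older's inequality and the Sobolev embedding theorem as in the proof of Proposition~\ref{WellP*ex1}.
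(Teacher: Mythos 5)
Your proposal is correct and follows exactly the route the paper takes: the paper's own proof is a one-line reduction to Proposition \ref{WellP*}, noting that each $F_i$ satisfies (\ref{hypoF1}) with $n_1=n_2=1$ (so that condition (\ref{cond1}) reads $d\le 4$ and $d<4$, i.e.\ $d\le 3$), the verification being the same H\"older--Sobolev argument as in the proof of Proposition \ref{WellP*ex1}. Your expansion of that verification (splitting the quadratic increments, placing delayed-state differences in $D((-\caA)^\beta)$ via $H^{2\beta}\hookrightarrow L^3$ with $\beta\in[\tfrac14,\tfrac12)$) is exactly the detail the paper leaves implicit.
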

\begin{proof}
It is a direct consequence of Proposition \ref{WellP*} since it is easy  to check that
$F_i$ satifies (\ref{hypoF1})
with  $n_1=n_2=1$
and
$C_0=\max_{i=1,2}\sup_\Omega |a_i+\varepsilon_i|$.
\end{proof}

As before global existence and exponential decay follow from Theorem \ref{Wellnonlineargeneral}.

\begin{Theorem}\label{Wellnonlineargeneral:expp}
Assume that $d\leq 3$ and that
\be\label{C1general:exppDirDir}
\sup_\Omega |a_i|<\lambda_1, i=1,2,
\ee
in case 1, that
\be\label{C1general:exppDirNeu}
\sup_\Omega |a_1|<\mu_1,  \ \sup_\Omega a_2<0,\  2\sup_\Omega a_2- \inf_\Omega a_2<2\mu_1,
\ee
in case 2, that
\be\label{C1general:exppNeuDir}
\sup_\Omega |a_2|<\mu_2,  \ \sup_\Omega a_1<0,\   2\sup_\Omega a_1- \inf_\Omega a_1<2\mu_2,
\ee
in case 3, and that
\be\label{C1general:exppNeuNeu}
\sup_\Omega  a_1 <0,  \ \sup_\Omega a_2<0,
\ee
in case 4.
Then there exist $K_0>0$ small enough and $\gamma_0<1$ (depending on $K_0$) such that for all
$K\in (0, K_0]$ and   $(u_{0,1},u_{0,2})^\top\in C([0,\tau];\caV )$ satisfying
\begin{equation}\label{assump:expp}
\|(u_{0,1},u_{0,2})^\top(t)\|_\caV<\gamma_0 K, \ \ \forall\  t\in [-\tau,0],
\end{equation}
problem $(\ref{C1.1})-(\ref{C1.3})$ has a global solution  $(u_1, u_2)^\top$ that satisfies the exponential decay estimate
 \begin{equation}\label{exponentialdbis:expp}
\|(u_1(\cdot, t), u_2(\cdot, t))^\top\|_\caH\le  M e^{-\tilde \omega t}
\quad \forall t\ge 0\,,
\end{equation}
for a   positive constant  $M$ depending on $u_0$ and a suitable  positive constant $\tilde\omega$.
 \end{Theorem}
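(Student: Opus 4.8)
The plan is to verify the hypotheses of Theorem \ref{Wellnonlineargeneral} for the abstract reformulation of \eqref{C1.1pp}--\eqref{C1.3pp} that was set up just before the statement. Since Proposition \ref{WellP*expp} already gives the local existence part (via \eqref{hypoF1} with $n_1=n_2=1$), it suffices to check the energy-type inequality \eqref{H10bis} and the smallness condition \eqref{C1general}, i.e.\ $C_1/\lambda_1 < 1$, in each of the four cases. Once these are in place, the conclusion \eqref{exponentialdbis:expp} follows verbatim from Theorem \ref{Wellnonlineargeneral}.

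First I would compute $\sum_{i=1}^2 (W, F_i(U,V_i))_\caH$ for $W\in\caH$ and $U=(u_1,u_2)$, $V_i\in\caV$. Because each $F_i$ has the structure $(\text{linear}+\text{quadratic})$ in its first argument times a factor from the delayed argument, the linear part contributes $((a_1+\varepsilon_1)u_1, W_1)+((a_2+\varepsilon_2)u_2,W_2)$ (Neumann entries) or $(a_1 u_1,W_1)$, $(a_2 u_2,W_2)$ (Dirichlet/Robin entries), while the remaining terms are cubic expressions like $\int a_{11}u_1^2 W_1$, $\int a'_{11}u_1 v_1 W_1$, etc. The linear part is bounded by $C_1\|U\|_\caH\|W\|_\caH$ where $C_1$ is the appropriate supremum of $|a_i+\varepsilon_i|$ or $|a_i|$; the cubic terms are bounded, via H\"older and the embedding $\caV\hookrightarrow L^4(\Omega)$ (valid since $d\le 3$, wait $d\le 4$ actually, but $d\le 3$ is assumed), by $\|W\|_\caH\,\|U\|_\caV\,h_3(\|U\|_\caV,\|V_1\|_\caV,\|V_2\|_\caV)$ for a polynomial $h_3$ with $h_3(0)=0$. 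This is exactly \eqref{H10bis}, so the real content is choosing $C_1$ correctly in each case and then matching it against $\lambda_1$.

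The heart of the proof is the case analysis for \eqref{C1general}. In case 1, $\varepsilon_1=\varepsilon_2=\varepsilon=\lambda_1$ and the linear coefficient is $|a_i+\varepsilon|$; choosing $\varepsilon$ large one reduces $C_1/\lambda_1<1$ to $\sup|a_i|<\lambda_1$ only if one is careful — actually with $\varepsilon$ free one wants to optimize, and the condition becomes the stated $\sup_\Omega|a_i|<\lambda_1$ (here $\lambda_1$ is $\varepsilon$, so this needs rereading; more likely one keeps $\caA$ with $\varepsilon$ and the genuine constraint is $\sup|a_i| < \lambda_1$ where $\lambda_1$ is the true smallest eigenvalue, which for pure Neumann is $0$, hence the $\varepsilon$-shift trick). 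In the mixed cases 2 and 3, only one component carries an $\varepsilon$-shift, so when estimating $(W_2,(a_2+\varepsilon_2)u_2)$ one must split $a_2 = \tfrac12(\sup a_2+\inf a_2) + (\text{bounded remainder})$ and absorb part into $-\|U\|_\caV^2$; optimizing $\varepsilon_2$ then yields precisely the three-part condition $\sup a_2<0$, $2\sup a_2-\inf a_2<2\mu_1$ (and likewise with indices swapped in case 3). In case 4 there is no shift and the condition is simply $\sup a_i<0$. I expect the main obstacle to be getting these sharp $\varepsilon$-optimized inequalities right — in particular justifying why $2\sup_\Omega a_2-\inf_\Omega a_2<2\mu_1$ is both necessary and sufficient for $C_1<\lambda_1$ after the optimal choice of $\varepsilon_2$ — rather than the (routine) Sobolev estimates for the cubic terms.

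Concretely I would organize the write-up as: (i) state that local existence is Proposition \ref{WellP*expp}; (ii) establish \eqref{H10bis} by the H\"older/embedding argument above, identifying $h_3$ and the linear constant; (iii) treat the four cases in turn, in each performing the $\varepsilon$-shift optimization to see that \eqref{C1general} reduces to the displayed hypothesis \eqref{C1general:exppDirDir}--\eqref{C1general:exppNeuNeu}; (iv) invoke Theorem \ref{Wellnonlineargeneral} to conclude. No new technical machinery is needed beyond what is already in Section \ref{secgeneralNL} and the standard Sobolev embeddings.
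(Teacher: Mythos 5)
Your skeleton is the paper's: take local existence from Proposition \ref{WellP*expp}, verify (\ref{H10bis}) with the linear constant $C_1=\max_{i=1,2}\sup_\Omega|a_i+\varepsilon_i|$ and a polynomial $h_3$ for the cubic terms (H\"older plus the Sobolev embedding of $\caV$ into $L^4(\Omega)$, as in the proof of Proposition \ref{WellP*expp}), then check (\ref{C1general}) case by case and invoke Theorem \ref{Wellnonlineargeneral}. Steps (i), (ii) and (iv) of your plan coincide with what the paper does and are unproblematic.

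The gap is precisely the step you yourself flag as ``the main obstacle'': the derivation of the mixed-boundary-condition criteria (\ref{C1general:exppDirNeu}) and (\ref{C1general:exppNeuDir}) is never carried out, and the mechanism you sketch for it --- splitting $a_2=\frac{1}{2}(\sup_\Omega a_2+\inf_\Omega a_2)+(\mbox{remainder})$ and ``absorbing part into $-\|U\|_\caV^2$'' --- is not what produces these conditions. In the paper the argument is purely algebraic and happens entirely at the level of (\ref{C1general}): in case 2 one has $C_1=\max\{\sup_\Omega|a_1|,\,\sup_\Omega|a_2+\varepsilon_2|\}$ and $\lambda_1=\min\{\mu_1,\varepsilon_2\}$, so $C_1<\lambda_1$ is the system of four inequalities $\sup_\Omega|a_1|<\mu_1$, $\sup_\Omega|a_1|<\varepsilon_2$, $\sup_\Omega|a_2+\varepsilon_2|<\mu_1$, $\sup_\Omega|a_2+\varepsilon_2|<\varepsilon_2$, and the whole content of the proof is deciding for which data a single constant $\varepsilon_2>0$ solving this system exists; the paper introduces a margin $\delta>0$, rewrites the system as $\max\{a_1+\delta,-a_1+\delta,\frac{\delta-a_2}{2},-a_2-\mu_1+\delta\}\leq\varepsilon_2\leq\mu_1-\delta-a_2$ on $\Omega$, and takes $\varepsilon_2=\mu_1-\sup_\Omega a_2-\delta$; the constraint $\frac{\delta-a_2}{2}\leq\varepsilon_2$ on all of $\Omega$ is exactly what yields $2\sup_\Omega a_2-\inf_\Omega a_2<2\mu_1$, and $\sup_\Omega|a_2+\varepsilon_2|<\varepsilon_2$ forces $\sup_\Omega a_2<0$. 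Without this computation the hypotheses of the theorem cannot be recovered from (\ref{C1general}), so the proof is incomplete where it matters most. A smaller but real defect is your unresolved parenthetical about which condition belongs to which case: with $\varepsilon$ free, the pure-Neumann situation reduces $\sup_\Omega|a_i+\varepsilon|<\varepsilon$ to $\sup_\Omega a_i<0$ (choose $\varepsilon$ large), while the pure Dirichlet/Robin situation has no shift and gives $\sup_\Omega|a_i|<\lambda_1$; your text assigns these the wrong way around in places, and a correct write-up must settle this before the case analysis can be trusted.
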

\begin{proof}
It suffices to check that (\ref{H10bis}) and (\ref{C1general}) hold.
The first condition directly follows from Cauchy-Schwarz's inequality
where $C_1=C_0=\max_{i=1,2}\sup_\Omega |a_i+\varepsilon_i|$.
In the first and fourth cases, the second condition (\ref{C1general})
simply becomes either
(\ref{C1general:exppDirDir})  or (\ref{C1general:exppNeuNeu}).
Hence it remains to check that in cases 2 and 3, one can find $\varepsilon_1$ and $\varepsilon_2$
appropriately. By symmetry, it suffices to look at case 2. In such a situation,
(\ref{C1general}) becomes
\[
\max\{\sup_\Omega |a_1|, \sup_\Omega |a_2+\varepsilon_2|\}<\min \{\mu_1, \varepsilon_2\},
\]
that is clearly equivalent to
\beqs
\sup_\Omega |a_1|<\mu_1 \hbox{ and } \sup_\Omega |a_1|<\varepsilon_2,\\
\sup_\Omega |a_2+\varepsilon_2|<\mu_1 \hbox{ and } \sup_\Omega |a_2+\varepsilon_2|<\varepsilon_2.
\eeqs
These conditions hold if and only if there exists $\delta >0$ small enough such that
\beqs
 -\mu_1+\delta \leq a_1\leq \mu_1-\delta  \hbox{ and } -\varepsilon_2+\delta\leq a_1<\varepsilon_2-\delta  \hbox{ in } \Omega,\\
 -\mu_1+\delta\leq a_2+\varepsilon_2\leq \mu_1-\delta  \hbox{ and } -\varepsilon_2+\delta\leq  a_2+\varepsilon_2\leq \varepsilon_2-\delta
 \hbox{ in } \Omega.
\eeqs
By re-arranging those conditions, we find
\beq
\label{serge:13/03:2}
 -\mu_1+\delta \leq a_1\leq \mu_1-\delta  \hbox{ and } a_2\leq -\delta   \hbox{ in } \Omega,\\
\max\{a_1+\delta,-a_1+\delta, \frac{\delta-a_2}{2}, -a_2-\mu_1+\delta\}\leq
\varepsilon_2\leq \mu_1-\delta -a_2
 \hbox{ in } \Omega.
 \label{serge:13/03:3}
\eeq
A necessary condition to find such a $\varepsilon_2$ is that
\[
\max\{a_1+\delta,-a_1+\delta, \frac{\delta-a_2}{2}, -a_2-\mu_1+\delta\} \leq \mu_1-\delta -a_2
 \hbox{ in } \Omega,
 \]
which is indeed true if $\delta$ is small enough and if (\ref{serge:13/03:2}) holds. Therefore we fix
 \[
 \varepsilon_2=\inf_\Omega( \mu_1-\delta -a_2 )= \mu_1-\sup_\Omega a_2-\delta,
 \]
 with $\delta>0$ small enough so that this quantity is positive.
 We then easily check that (\ref{serge:13/03:3}) holds
 under the assumption (\ref{serge:13/03:2}) and the condition
 \[
 3\delta+2\sup_\Omega a_2- \inf_\Omega a_2\leq 2\mu_1.
 \]
Hence this  condition and  (\ref{serge:13/03:2}) hold for $\delta$ small enough if and only if
 (\ref{C1general:exppDirNeu}) is valid.
 \end{proof}

\begin{Remark}{\rm
Clearly, our approach can be used to study a general system of $N$-competing species with delays
like the system (1.3) of \cite{Pao:04} since the nonlinear terms appearing in this system
are similar to the ones from system (\ref{C1.1pp})-(\ref{C1.3pp}). We then let the details to the reader.
}\end{Remark}


 \end{document}